\newtheorem{theorem}{Theorem}[section]
\newtheorem{lemma}[theorem]{Lemma}
\newtheorem{proposition}[theorem]{Proposition}
\newtheoremstyle{TheoremNum}
{}{}              %
{\itshape}                      %
{}                              %
{\bfseries}                     %
{.}                             %
{ }                             %
{\thmname{#1}\thmnote{ \bfseries #3}}%
\theoremstyle{TheoremNum}
\newtheorem{thmn}{Theorem}
\newtheorem{lemn}{Lemma}
\theoremstyle{definition} %
\newtheorem{example}[theorem]{Example}
\newtheorem{definition}[theorem]{Definition}
\newtheorem{remark}[theorem]{Remark}
\newtheorem{remarks}[theorem]{Remarks}
\DeclareMathOperator{\Fun}{Fun}
\DeclareMathOperator{\Aut}{Aut}
\DeclareMathOperator{\Hom}{Hom}
\DeclareMathOperator{\id}{id}
\newcommand{\APL}{A_{\mathrm{PL}}}
\newcommand{\Ho}{\mathrm{Ho}}
\newcommand{\Z}{{\mathbb{Z}}}
\newcommand{\Q}{{\mathbb{Q}}}
\newcommand{\C}{{\mathcal{C}}}
\newcommand{\E}{\mathcal{E}}
\newcommand{\D}{{\mathcal{D}}}
\newcommand{\op}{\textup{op}}
\newcommand{\inj}{\textup{inj}}
\newcommand{\proj}{\textup{proj}}
\newcommand{\sSet}{\mathsf{sSet}}
\newcommand{\cDGL}{\mathsf{cDGL}}
\newcommand{\Vect}{\mathsf{Vect}}
\newcommand{\CDGA}{\mathsf{CDGA}}
\newcommand{\Ch}{\mathrm{Ch}}
\newcommand{\Chz}{\mathrm{Ch}^{\geq 0}}
\newcommand{\Chun}{\mathrm{Ch}^{\geq 1}}
\newcommand{\Chaz}{\mathrm{Ch}_{\geq 0}}
\newcommand{\GsSet}{G\textrm{-}\mathsf{sSet}}
\newcommand{\OG}{{\mathcal{O}_G}}
\newcommand{\OGop}{{\mathcal{O}^\op_G}}
\DeclareMathOperator{\ev}{ev}
\newcommand{\QWH}{\Q[W(H)]\textrm{-}\mathsf{Mod}}
\newcommand{\V}{{\mathcal{V}}}
\newcommand{\A}{\mathcal{A}}
\renewcommand{\S}{\mathcal{S}}
\newcommand{\I}{\mathcal{I}}
\newcommand{\J}{\mathcal{J}}
\renewcommand{\O}{\mathcal{O}}
\newcommand{\PH}{\underline{P}_H}
\newcommand{\FW}{\mathcal{F}\mathcal{W}}
\newcommand{\Sub}{\mathrm{Sub}}
\definecolor{dark-red}{rgb}{0.6,.15,.15}
\definecolor{dark-blue}{rgb}{.1,.1,.65}
\definecolor{dark-green}{rgb}{.1,.65,.1}
\author{José Manuel Moreno Fernández}
\address{Departamento de Álgebra, Geometría y Topología, Universidad de Málaga, 29080, Málaga, Spain}
\email{josemoreno@uma.es}
\author{Bruno Stonek}
\address{Faculty of Mathematics, Informatics and Mechanics, University of Warsaw, ul. Banacha 2, 02-097 Warszawa, Poland}
\email{bstonek@mimuw.edu.pl}
\begin{document}

\title{Algebraic models for equivariant rational homotopy theory for discrete groups}
\date{}

\begin{abstract} We provide a framework which generalizes algebraic models of a homotopy theory of spaces to 
the genuine equivariant case for a discrete group. 
We explain how this applies to commutative differential graded algebra (cdga) models and complete differential graded Lie algebra models for rational spaces.
We compare the cdga model to other model categories in the literature, correcting some mistakes in previous approaches.

As an intermediary result, we get that the injective model structure on the category of non-negative cochain complexes with values in a Grothendieck abelian category is cofibrantly generated, and we provide explicit generating (acyclic) cofibrations.
\end{abstract}

\subjclass{55P62, 55P91, 18N40, 18G35}
\keywords{Equivariant rational homotopy theory, model categories, chain complexes}

\maketitle

\section{Introduction}

The very heart of algebraic topology is about striving to capture features of topological spaces via tractable algebraic invariants. 
Singular cohomology, for example, is the cohomology of a cochain complex, which we treat as an algebraic model for the space, or rather its homotopy type. 
A natural question is: can one get ``perfect'' models, that is, models that  
are in one-to-one correspondence with homotopy types, 
and which are moreover computable?

This is hopelessly naive, but one can refine the question and restrict the spaces enough until one starts getting positive results. 
The first such result is due to Quillen \cite{Quillen-rht}. 
He considers simply-connected pointed spaces (or 1-reduced simplicial sets), 
taken up to rational homotopy equivalence.
On the algebraic side, he gives, most notably, differential graded Lie (dg Lie) and
differential graded cocommutative coalgebra (cdgc) models. 
To establish their equivalence means: to give these categories some model structure and to produce Quillen equivalences between them.
This is what Quillen achieved, but  the connection between spaces and these models was a bit roundabout, 
as it did not arise from direct Quillen equivalences but rather as zig-zags of Quillen equivalences.

A few years later, Bousfield and Gugenheim \cite{Bousfield--Gugenheim}, 
building on work by Sullivan \cite{FriedlanderGriffithsMorgan1972,Sullivan} 
gave commutative differential graded algebra (cdga) models for rational spaces which are nilpotent, which is more general than being simply-connected.
Unfortunately, the spaces and algebras need finite-type hypotheses for this to work. 
On the other hand, there is no need for base points, although they do give a pointed version for their equivalence as well. %
This equivalence of homotopy theories takes the form of a 
single Quillen adjunction whose induced adjunction on homotopy categories restricts to an equivalence on some full subcategories. 
We review this in \cref{sec:bg}, where we highlight that this is an instance of a \emph{partial Quillen equivalence} (more on this below).

Much later, new dg Lie models for the homotopy theory of rational spaces were produced. This was a collective effort a long time in the making. Some of the first papers in this direction were Getzler's and Hinich's \cite{Hinich, Getzler}. We shall exploit the approach using complete dg Lie algebras (cdgls) from \cite{LieModelsInTopology}, as it fits our framework most nicely. 
Another well-developed approach to modeling rational spaces with the closely related $L_\infty$-algebras can be found in \cite{NicoudVallette}, %
see also \cite{Roca} for an approach in terms of curved absolute $L_\infty$-algebras.

It is known that cdgls model connected nilpotent pointed rational Kan complexes \cite[Thm. 0.5]{LieCharacterizationBKcompletion}, 
and this approach presents some advantages to %
Quillen's dg Lie models.
Indeed, it is a direct comparison: it takes the form of a single partial Quillen equivalence. 
Moreover, it applies to nilpotent instead of simply-connected spaces. 
We review these results in \cref{sec:Lie}.

In this paper, we are only interested in the rational case, but
we mention in passing that arguably the strongest result thus far on algebraic models for spaces is due to Mandell, 
who produced $E_\infty$-algebra models for nilpotent finite-type spaces, not necessarily rational \cite{Mandell}. \medskip

Our aim is to explain how these algebraic models for rational spaces can be generalized to the $G$-equivariant setting in a clean, 
systematic way, where $G$ is a discrete group. 
We present a framework that could be used to give equivariant generalizations of other algebraic models for spaces, also not necessarily rational ones.

In order to achieve this we adopt Elmendorf's insight \cite{Elmendorf,MarcStephan} that one can model genuine $G$-equivariant homotopy theory via simplicial presheaves on the orbit category $\OG$. More precisely, the category of contravariant functors from $\OG$ to simplicial sets, endowed with the projective model structure, is Quillen equivalent to the genuine model structure on the category of $G$-simplicial sets (which is in turn Quillen equivalent to the analogous model category of $G$-topological spaces).

The following unifying definition will be very helpful: if $\C_0\subset \C$, $\D_0\subset \D$ are full subcategories and $\C\rightleftarrows \D$ is a Quillen adjunction, we say it is a \emph{partial Quillen equivalence on $\C_0$ and $\D_0$} if it satisfies the natural hypotheses that guarantee that the derived adjunction $\Ho(\C)\rightleftarrows \Ho(\D)$ between homotopy categories restricts
to an equivalence of categories on the full subcategories on the objects of $\C_0$ and $\D_0$, respectively.
Most importantly, the derived unit (resp. counit) on the %
objects of the corresponding subcategories must be weak equivalences. See \cref{sect-partial} for details. This simple definition serves to capture in a streamlined way the type of equivalence obtained by Bousfield--Gugenheim between rational Kan complexes and cdgas, and the type of equivalence obtained by Buijs--Félix--Murillo--Tanré between rational Kan complexes and cdgls.

In view of the above, to give cdga models or cdgl models for $G$-equivariant spaces, we can consider their categories of $\OG$-indexed presheaves with the projective model structure. 
Indeed, an adjunction $\C\rightleftarrows \D$ induces an adjunction $\C^I \rightleftarrows \D^I$ between functor categories, where $I$ is any 
small category,
and if the former is a Quillen equivalence of model categories, then the latter is so, too, with the projective model structures.
Since our examples are \emph{partial} Quillen equivalences, we first prove the following result. See \cref{lem-sciarappa} in the text for details on the technical conditions.

\begin{lemn}[\ref{lem-sciarappa}] Let $I$ be a small category. 
Let $\C\rightleftarrows \D$ be a partial Quillen equivalence on the subcategories $\C_0$, $\D_0$. Then assuming mild technical conditions automatically satisfied in our examples, the induced adjunction $\C^I_\proj \rightleftarrows \D^I_\proj$ is a partial Quillen equivalence on the subcategories $\C_0^I$, $\D_0^I$.
\end{lemn}

This lemma is not surprising, but it has the %
consequence of supplying the algebraic models that we desired. First, we can apply this to the partial
Quillen equivalence giving cdgl models for rational Kan complexes, namely $\mathcal L : \sSet \rightleftarrows \cDGL : \langle - \rangle$. We obtain a model category giving Lie algebraic models for $G$-equivariant spaces:

\begin{thmn}[\ref{cor-cdgl}] Let $G$ be a discrete group.
There is a Quillen adjunction 
\[
\mathcal{L}_*:\sSet^\OGop_\proj \rightleftarrows \cDGL^\OGop_\proj:\langle-\rangle_*.
\]
It is a partial Quillen equivalence on the full subcategories of $\OG$-indexed presheaves of %
connected nilpotent rational %
simplicial sets and the $\OG$-indexed presheaves of homologically nilpotent %
cdgls weakly equivalent to a connected one.
\end{thmn}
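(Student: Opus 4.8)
The plan is to obtain this statement as a direct application of \cref{lem-sciarappa}, exactly as was done for cdgas in \cref{cor-sciarappa-cdga}, but now applied to the covariant partial Quillen equivalence $\mathcal L : \sSet \rightleftarrows \cDGL : \langle - \rangle$ recalled in \cref{sec:Lie}. I would take the index category to be $I = \OGop$, let $\C_0 \subset \sSet$ be the full subcategory of $0$-reduced nilpotent rational Kan complexes and $\D_0 \subset \cDGL$ the full subcategory of homologically nilpotent cofibrant cdgls; these are precisely the subcategories on which $\mathcal L \dashv \langle - \rangle$ is a partial Quillen equivalence. Then \cref{lem-sciarappa} produces a partial Quillen equivalence $\mathcal L_* : \sSet^\OGop_\proj \rightleftarrows \cDGL^\OGop_\proj : \langle - \rangle_*$ on the subcategories $\C_0^\OGop$ and $\D_0^\OGop$, and these unwind to the objectwise conditions in the statement.

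Before invoking the lemma there are a couple of routine points to settle. First, $\OGop$ is a small category, since the orbit category $\OG$ of a discrete group has, up to isomorphism, a set of objects $G/H$ and hom-sets of $G$-maps between them. Second, in contrast with the Bousfield--Gugenheim functors $\APL$, the functor $\mathcal L$ is covariant, so post-composition introduces no opposite category: both $\sSet^\OGop$ and $\cDGL^\OGop$ are genuine presheaf categories on $\OG$ and both receive the projective model structure, with no passage to an injective one. This matches the shape of the statement and explains why here we stay on the presheaf side throughout, as opposed to \cref{cor-sciarappa-cdga}.

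The one hypothesis that is not purely formal, and which I expect to be the main obstacle, is the existence of the projective model structure on $\cDGL^\OGop$ presupposed by \cref{lem-sciarappa}. For $\sSet$ this is classical, as it is cofibrantly generated. For $\cDGL$, the existence of the projective structure on presheaves rests on $\cDGL$ being cofibrantly generated, and here the completeness built into complete dg Lie algebras is the delicate point: colimits are computed by completing the underlying (co)limits, so one must check that the generating (trivial) cofibrations and the small-object argument interact well with completion. I would therefore either cite the relevant cofibrant generation result for $\cDGL$ from \cite{LieModelsInTopology} or verify it directly, after which the projective model structure on $\cDGL^\OGop$ exists and \cref{lem-sciarappa} applies verbatim. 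Everything else --- that $\C_0^\OGop$ and $\D_0^\OGop$ are exactly the presheaves of $0$-reduced nilpotent rational Kan complexes and of homologically nilpotent cofibrant cdgls, and that the derived unit and counit are objectwise weak equivalences on them --- is then immediate from the lemma.
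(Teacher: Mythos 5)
Your proposal is correct and follows essentially the same route as the paper: the result is obtained by applying \cref{lem-sciarappa} with $I=\OGop$ to the partial Quillen equivalence $\mathcal L : \sSet \rightleftarrows \cDGL : \langle - \rangle$ of \cref{thm-cdgl-Quillen}, with the existence of the projective model structure on $\cDGL^\OGop$ guaranteed because the model structure on $\cDGL$ is cofibrantly generated by \cref{teo: adjunction sSet cdgl} (it is cofibrantly right-transferred). Your observations that covariance keeps everything on the presheaf side with projective structures, and that cofibrant generation of $\cDGL$ is the only point to check, are exactly the relevant ones.
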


This result is theoretically satisfying but hard to use in practice until there is a theory of equivariant Lie minimal models that would provide computable cofibrant replacements. This will not be a problem with cdga models, as we will now explain.\medskip

We apply \cref{lem-sciarappa} to the partial Quillen equivalence $\APL: \sSet\rightleftarrows \CDGA^\op:\langle - \rangle$ of Bousfield--Gugenheim, to obtain the 
following result:

\begin{thmn}[\ref{cor-sciarappa-cdga}] Let $G$ be a discrete  group.
There is a  Quillen adjunction 
\[
\left(\APL\right)_*:\sSet^\OGop_\proj\rightleftarrows \left(\CDGA^\OG_\inj\right)^\op:\langle - \rangle_*.
\]
It is a partial Quillen equivalence on the $\OG$-indexed presheaves of connected,
nilpotent rational %
simplicial sets of finite type and the
$G$-systems of cohomologically connected %
cdgas weakly of finite type. Here $\inj$ indicates an injective model structure.
\end{thmn}

Note that due to the contravariance of Bousfield--Gugenheim's functors, on the cdga side the projective structure turns into the injective, and we do not get $\OG$-presheaves but rather \emph{covariant} functors from $\OG$, which we call \emph{$G$-systems of cdgas}.
There are different model structures on $G$-systems of cdgas with a similar relation to $G$-equivariant rational homotopy theory in the literature, see e.g. \cite{Golasinski98,Mandell_2002,Scull-2008}.
We explain how these are related to our work in \cref{sec-other-models}. We humbly believe our approach is more streamlined and systematic.

Next, we define a different model structure on $\CDGA^\OG$, for reasons that we make clear below. It is the right-transferred model structure via the forgetful functor $\CDGA^\OG \to (\Chz(\Vect))^\OG\cong \Chz(\Vect^\OG)$ to $G$-systems of cochain complexes of vector spaces, or equivalently, cochain complexes of $G$-systems of vector spaces, which we endow with the injective model structure. For this reason, we call it the \emph{$r(\inj)$ model structure} in $\CDGA^\OG$. To establish the existence of this model structure requires some work, done in \cref{sect-two-model}.

\begin{thmn}[\ref{thm-IJ-cdga}]The $r(\inj)$ model structure on $\CDGA^\OG$ exists. Its weak equivalences are the component-wise weak equivalences and its fibrations are the epimorphisms with degreewise injective kernel as an object of $\Vect^\OG$. It is cofibrantly generated. The fibrant objects are the systems of cdgas which are degreewise injective as objects of $\Vect^\OG$.
\end{thmn}

The proof of the above is an application of Kan's classical theorem on right-transferred model structures. In order for it to apply, we need the injective model structure on $\Chz(\Vect^\OG)$ to be cofibrantly generated. We prove this, giving explicit sets of generating (acyclic) cofibrations:

\begin{thmn}[\ref{thm-IJ}]  There is an \emph{injective} model structure on $\Chz(\Vect^\OG)$ whose weak equivalences are the quasi-isomorphisms, the cofibrations are the monomorphisms in positive degrees, and the fibrations are the epimorphisms with degreewise injective kernel. It is cofibrantly generated. Every object is cofibrant, and the fibrant objects are the complexes which are degreewise injective.
\end{thmn}

Note that this model structure, interpreted as the injective model structure on the category of functors from $\OG$ to non-negative cochain complexes $\Chz(\Vect)$, is known to be cofibrantly generated \cite[Prop. A.2.8.2]{LurieHTT}, but the sets of generating (acyclic) cofibrations available in a general injective model structure in a functor category are large and not too explicit; a similar phenomenon occurs in the injective model structure on unbounded chain complexes  \cite[Thm. 2.3.13]{Hovey-ModelCategories}, \cite[Prop. 3.13]{Beke}. %
Here, on the contrary, we give explicit generating (acyclic) cofibrations. They are defined using sphere and disk complexes %
over generators and over a \emph{Baer set}, i.e. a set of monomorphisms that detect injectivity, just like inclusions of ideals into a commutative ring detect injectivity in the category of modules over it. We refer to (\ref{subsub-IJ}) for more details. 

We then explain in \cref{thm-grothendieck} how to generalize this to non-negative cochain complexes in more general abelian categories $\A$, a result of independent interest:

\begin{thmn}[\ref{thm-grothendieck}] 
Let $\A$ be a Grothendieck abelian category with a set of generators $\{P_H\}_{H\in \O}$ indexed by a set $\O$. Let $\{U_t\to V_t\}_{t\in T}$ be a Baer set in $\A$. The \emph{injective} model structure on $\Chz(\A)$ exists: its weak equivalences are the quasi-isomorphisms, the fibrations are the epimorphisms with degreewise injective kernel, and the cofibrations are the monomorphisms in positive degrees. It is cofibrantly generated, with generating acyclic cofibrations given by %
\[J=\left\{0\to D^n(P_H)\right\}_{\substack{n\geq 1 \\ H\in \O}} \quad \cup \quad \left\{D^n(U_t)\to D^n(V_t)\right\}_{\substack{n\geq 1 \\ t\in T}}\]
        and generating cofibrations given by 
\[I=\left\{S^n(P_H) \to D^n(P_H)\right\}_{\substack{n\geq 1 \\ H \in \O}} \quad \cup \quad \left\{S^0(P_H) \to 0 \right\}_{H\in \O} \quad \cup \quad \left\{S^n(U_t) \to D^n(V_t)\right\}_{\substack{n\geq 1 \\ t\in T}}.\]
Every object is cofibrant, and the fibrant objects are the complexes which are degreewise injective.
\end{thmn}

Going back to the $r(\inj)$ model structure on $\CDGA^\OG$: while this is not the same model structure as the injective model structure, they are Quillen equivalent, as we prove in \cref{prop-comparison-inj-r(inj)}. From this, we obtain that the $r(\inj)$ model structure also serves to model $G$-equivariant spaces:

\begin{thmn}[\ref{thm-spaces-r(inj)}] 
There is a  Quillen adjunction \[\left(\APL\right)_*:\sSet^\OGop_\proj\rightleftarrows \left(\CDGA^\OG_{r(\inj)}\right)^\op:\langle - \rangle_*.\] 
It is a partial Quillen equivalence on the $\OG$-indexed presheaves of connected,
nilpotent rational %
simplicial sets of finite type and the $G$-systems of cohomologically connected %
cdgas weakly of finite type.
\end{thmn}

The reason this is a more interesting theorem than the analogous \cref{cor-sciarappa-cdga} from before, which used the injective model structure on $\CDGA^\OG$, is that there is a theory of minimal models giving cofibrant replacements in $\CDGA^\OG_{r(\inj)}$, thus rendering these algebraic models computable. This theory was worked out by \cite{Triantafillou} and it needs the added hypothesis that $G$ is finite, see also \cite{Scull-2002}. In \cite[Cor. 4.3]{Scull-2008} it was proven that minimal models are cofibrant approximations in the model category we call $(\CDGA^1)^\OG_{r(\inj)}$: the superscript indicates we are restricting to \emph{connected} cdgas. We work out the details of this model structure in \cref{sect-rinj-connected}. This corrects several problems in \cite{Scull-2008}, as we explain in \cref{sec-scull}. The proof given there for the existence of the $r(\inj)$ model structure on $G$-systems of connected cdgas was incomplete in some parts and wrong in others, notably in the description of the sets of generating (acyclic) cofibrations. Our correction also has the advantage of making all these structures and their interactions clearer. We repeatedly use the classical theorem of Kan which establishes the existence of transferred model structures along a right adjoint.

We also note that in the upcoming article \cite{Golub}, Golub is working towards understanding minimal models of cdgas in other categories of diagrams, replacing $\OG$ with a more general small category. \medskip

We now make some final comments. There is a growing interest in the theory of systems of cdgas. 
For example, Sati--Schreiber maintain a research program on the connection between
equivariant rational homotopy theory applied to M-theory and higher gauge theory, 
see for instance \cite{SatiSchreiber2025CharacterMap}, where these systems are an essential tool.
On the other hand, Santhanam--Thandar \cite{SanthanamThandar} work toward the understanding of the notion of injectivity (cofibrancy) and formality of these systems.

\subsection{Notations and conventions} All vector spaces are taken to be over $\Q$. A graded vector space or (co)chain complex is said to be of \emph{finite type} if it is finite-dimensional in each degree. 
Denote by 
\[
\Ch^*, \qquad \Ch_*, \qquad \Chz,\qquad  \Chaz
\]
the categories of unbounded cochain complexes, unbounded chain complexes,
non-negative cochain complexes and non-negative chain complexes respectively, 
all of them of rational vector spaces.  We consider these categories endowed with the standard symmetric monoidal structure whose symmetry has a Koszul sign.

If $\C$ is a category and $I$ is a small category, we denote by $\Fun(I,\C)$ or by $\C^I$ the category of functors from $I$ to $\C$. 
When we write an adjunction as $\C\rightleftarrows \D$, the left adjoint is always on top.

We do not assume model categories to have functorial (co)fibrant replacements. We denote by $\Ho(\C)$ the homotopy category of a model category, which has the same objects as $\C$. 
If $I$ is a class of maps of $\C$, 
denote by $I^{\boxslash}$ the class of maps which have the right lifting property with respect to the maps of $I$, 
and by ${}^{\boxslash}I$ the class of maps which have the left lifting property with respect to the maps of $I$.

If $\C$ is a model category, then we consider $\C^\op$ as a model category with the opposite model structure \cite[Rem. 1.1.7]{Hovey-ModelCategories}.

Throughout the article, $G$ denotes a discrete group.

\subsection{Acknowledgments}
The authors would like to thank James Gillespie, Nima Rasekh and Karol Szumiło for helpful conversations, and Katsuhiko Kuribayashi for his useful comments on the first version of this work. 
The first author has been partially supported by the MICINN grant PID2023-149804NB-I00,
the Junta de Andalucía grant ProyExcel-00827, 
and the Junta de Andalucía research group grupoPaidi-G-FEDER-FQM264.

\section{Generalities}

We start by gathering some elementary results that will be used in the rest of the paper.

\subsection{Transferred model structures}

\begin{definition}
    Let $L:\C\rightleftarrows \D:R$ be an adjunction between model categories.
    We say the model structure on $\D$ is \emph{right-transferred via $R$} if 
    the weak equivalences (resp. fibrations) of $\D$ are the maps $f$ such that $R(f)$ is a weak equivalence (resp. fibration) in $\C$.
\end{definition}

\begin{remarks} \label{rmk-cofgen}
\begin{enumerate}
\item In the situation above, if the model structure on $\D$ is right-transferred via $R$,
then the adjunction $L\dashv R$ is a Quillen adjunction, since $R$ preserves (acyclic) fibrations.
    \item
If $L:\C\rightleftarrows \D:R$ is an adjunction where $\C$ is a cofibrantly generated model category with set of generating cofibrations $I$ and set of generating acyclic cofibrations $J$,
there are different conditions that guarantee that the right-transferred model structure on $\D$ exists and has sets of generating (acyclic) cofibrations given by $L(I), L(J)$ respectively, 
see e.g. \cite[Thm. 11.3.2]{Hirschhorn}, \cite[Thm. 4.3.3]{Fresse-Teichmuller2} for one, or \cite[Rmk. 1.23]{LieModelsInTopology}, 
or \cite[Lem. 2.2]{drummond-cole--hackney}. %
\item \label{item-balchin}If $L:\C\rightleftarrows \D:R$ is an adjunction where $\C$ is
a cofibrantly generated model category and the right-transferred model structure on $\D$ via $R$ exists, 
then as soon as $L$ preserves small objects, this model structure on $\D$ is cofibrantly generated with sets of generating (acyclic) cofibrations $L(I), L(J)$ \cite[Prop. 4.4.4]{balchin}. %
For $L$ to preserve small objects,
it suffices for example that $\D$ is locally presentable (for in that case every object in $\D$ is small), 
or that $R$ preserves filtered colimits. In particular, if $L\dashv R$ is
the free-forgetful adjunction for algebras over some operad, then this hypothesis is automatic \cite[Prop. 3.3.1]{Fresse-2009}. 
\end{enumerate}
\end{remarks}

\begin{proposition} \label{prop-overunder} \cite[Rmk. 3.10, 3.11]{dwyer-spalinski} \cite[Thm. 1.20]{Hirschhorn2021}. %
Let $\C$ be a model category and $A\in \C$. Then the overcategory $\C_{/A}$ (resp. the undercategory $\C_{A/}$)
has a model structure where a map $f$ over $A$ (resp. under $A$) is a weak equivalence or (co)fibration 
if and only if it is a weak equivalence or (co)fibration in $\C$. If $\C$ is cofibrantly generated, 
then $\C_{/A}$ and $\C_{A/}$ are cofibrantly generated, where the generating (acyclic) 
cofibrations are the generating (acyclic) cofibrations of $\C$ which are over or under $A$ in all possible ways. %
\end{proposition}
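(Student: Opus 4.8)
The plan is to treat the overcategory $\C_{/A}$ in full and obtain the undercategory via the duality $\C_{A/}\cong\bigl((\C^\op)_{/A}\bigr)^\op$, being careful that this duality transfers the model structure but not, a priori, cofibrant generation. Write $U\colon\C_{/A}\to\C$ for the forgetful functor. Since $\C$ is complete and cocomplete, so is $\C_{/A}$: colimits are created by $U$ (with the induced structure map to $A$), while limits are computed as the corresponding limits over $A$. This gives MC1. Define a map $f$ in $\C_{/A}$ to be a weak equivalence, cofibration, or fibration exactly when $Uf$ is one in $\C$. Because $U$ is faithful and preserves composition, identities, and retracts, the 2-out-of-3 axiom and closure under retracts are inherited verbatim from $\C$.

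The one genuinely load-bearing observation is that lifts in $\C$ are automatically morphisms over $A$. Given a commuting square in $\C_{/A}$ with left edge $f$ and right edge $p$, applying $U$ yields a square in $\C$; if $\C$ supplies a diagonal filler $h$, then $h$ respects the structure maps to $A$: writing $p_{(-)}$ for the structure maps and $Z$ for the lower-left object, one has $p_X\circ h = p_Y\circ(X\to Y)\circ h = p_Y\circ(\text{bottom edge})=p_Z$, using that the bottom edge is over $A$ and that $h$ is a lift. Hence every lifting problem in $\C_{/A}$ is solved by solving its image in $\C$, so MC4 reduces to $\C$. For the factorizations MC5, factor $Uf$ in $\C$ as a (co)fibration followed by an acyclic (co)fibration, endow the intermediate object with the composite structure map to $A$, and observe that both factors are then morphisms over $A$ of the required type by detection. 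This establishes the model structure; the undercategory version follows by applying it to $\C^\op$ and dualizing.

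For cofibrant generation, suppose $\C$ has generating (acyclic) cofibrations $I$ (resp. $J$). Let $I_{/A}$ be the set of morphisms of $\C_{/A}$ whose underlying map lies in $I$, i.e. pairs consisting of $f\colon X\to Y$ in $I$ together with an arbitrary map $Y\to A$; this is a set because $I$ is a set and $\C$ is locally small, and these are exactly ``the generating cofibrations of $\C$ which are over $A$''. Define $J_{/A}$ analogously. The crux is the identity $\rlp{(I_{/A})}=\{p : Up\in\rlp{I}\}$ and likewise for $J$. The inclusion $\supseteq$ is immediate from the previous paragraph, since a lift against $Up$ in $\C$ promotes to a lift in $\C_{/A}$. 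For $\subseteq$, given a $\C$-lifting problem of $Up$ against some $f\in I$, one endows the codomain $Y$ with the structure map $Y\to B\to A$ (using the bottom edge and $p$'s structure map); this realizes the problem as a lifting problem in $\C_{/A}$ against an element of $I_{/A}$, whose solution solves the original. Since $\rlp{I}$ and $\rlp{J}$ are, respectively, the acyclic fibrations and fibrations of $\C$, detection identifies $\rlp{(I_{/A})}$ and $\rlp{(J_{/A})}$ with the acyclic fibrations and fibrations of $\C_{/A}$.

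It remains to check that $I_{/A}$ and $J_{/A}$ permit the small object argument. Because $U$ creates colimits, a map out of a domain $(X\to A)$ into a transfinite composite in $\C_{/A}$ is the same datum as a map $X\to\mathrm{colim}$ in $\C$ compatible over $A$; smallness of $X$ in $\C$ then factors it through a finite stage, and the factorization is automatically over $A$ by the same triangle identity as above. Thus the domains are small and $\C_{/A}$ is cofibrantly generated by $I_{/A},J_{/A}$. The undercategory $\C_{A/}$ is handled by the mirror-image argument: its generating (acyclic) cofibrations are the elements of $I$ (resp. $J$) equipped with a map from $A$ into the \emph{domain}, and lifts now automatically respect the maps \emph{from} $A$. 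I expect the main obstacle to be expository rather than mathematical: keeping straight the two compatibility directions and verifying that the ostensibly large families $I_{/A},J_{/A}$ are sets whose right lifting property genuinely matches the (acyclic) fibrations detected through $U$. Note in particular that one cannot obtain the undercategory's cofibrant generation by formal duality, since cofibrant generation is not preserved under passage to the opposite model category.
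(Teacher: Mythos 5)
Your treatment of the overcategory is correct and is essentially the standard argument behind the cited result: lifts in $\C$ are automatically morphisms over $A$, lifting problems and factorizations are promoted to $\C_{/A}$ by equipping the relevant objects with composite structure maps to $A$, and smallness passes along the colimit-creating forgetful functor. The model structure on $\C_{A/}$ via $\C_{A/}\cong\bigl((\C^\op)_{/A}\bigr)^\op$ is also fine. (A small slip in wording: the factorizations should read ``cofibration followed by acyclic fibration'' and ``acyclic cofibration followed by fibration''.)

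The genuine gap is your last step: cofibrant generation of the undercategory is \emph{not} the mirror image of the overcategory case, and with your proposed generators the claim is in fact false. The asymmetry sits in the promotion step. In the overcategory, a $\C$-lifting problem of $f\in I$ against $Up$ is promoted by giving the \emph{codomain} of $f$ the structure map ``bottom edge followed by the structure map of $B$''; in the undercategory you would instead need a map from $A$ into the \emph{domain} of $f$ compatible with the top edge, and the data supply no such map --- none need exist at all. Concretely, take $\C=\sSet$ and $A=\Delta^0$, so $\C_{A/}=\sSet_*$. An element of $J$ under $A$ is a horn inclusion $\Lambda^n_k\to\Delta^n$ with a chosen basepoint vertex. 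Since horns and simplices are connected, every pointed lifting problem of such a map against the pointed map $p\colon \Delta^0\amalg\Lambda^2_1\to\Delta^0\amalg\Delta^2$ (basepoints the isolated vertices, $p$ the identity on them and the horn inclusion on the other summands) lands entirely in the basepoint components and is solved by the constant lift; yet the underlying map of $p$ is not a Kan fibration. So the class of maps with the right lifting property against your $J_{A/}$ strictly contains the fibrations of $\C_{A/}$, and $J_{A/}$ cannot serve as a set of generating acyclic cofibrations. The standard repair is to transfer along the left adjoint $X\mapsto(A\to A\amalg X)$ of $U\colon\C_{A/}\to\C$: take as generating (acyclic) cofibrations the maps $A\amalg f$ for $f\in I$ (resp. $f\in J$), with the evident structure maps. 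By adjunction, $p$ in $\C_{A/}$ has the right lifting property against $A\amalg f$ if and only if $Up$ has it against $f$, so these sets detect exactly the (acyclic) fibrations; smallness holds because transfinite compositions in $\C_{A/}$ are connected colimits, hence computed in $\C$. (For $\sSet_*$ these are the familiar pointed generators obtained by adding a disjoint basepoint.) Note that the proposition's own wording, read literally for the undercategory, suffers from the same defect, so the undercategory generators should be understood as just described, following the cited source.
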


\subsection{Some classical model structures}

We now recall some classical model structures that will be used in the sequel.

\begin{theorem}
    There is a %
    model structure on the category $\sSet$ of simplicial sets
    whose weak equivalences are the weak homotopy equivalences, 
    the cofibrations are the monomorphisms (i.e. levelwise injections), and the fibrations are the Kan fibrations.
    It is cofibrantly generated by the boundary and horn inclusions. The fibrant objects are the Kan complexes and every object is cofibrant.
\end{theorem}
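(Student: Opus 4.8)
The plan is to verify Quillen's model category axioms directly, taking the cofibrations to be the monomorphisms, the fibrations to be the Kan fibrations (the maps in $\rlp{J}$ for $J = \{\Lambda^n_k \hookrightarrow \Delta^n\}$ the set of horn inclusions), and a map $f$ to be a weak equivalence when its geometric realization $|f|$ is a weak homotopy equivalence of spaces. Completeness and cocompleteness are immediate since $\sSet$ is a presheaf category, hence locally presentable. Two-out-of-three and closure under retracts for the weak equivalences are inherited from the analogous properties of weak homotopy equivalences of spaces, because $|-|$ preserves composites and retracts; closure under retracts for cofibrations and fibrations is formal, each class being defined by a lifting property.

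For the factorizations I would run the small object argument, which applies because every object of $\sSet$ is small. Applied to $J$ and to the set $I = \{\partial \Delta^n \hookrightarrow \Delta^n\}$ of boundary inclusions, it factors an arbitrary map as a relative $J$-cell complex followed by a map in $\rlp{J}$, and as a relative $I$-cell complex followed by a map in $\rlp{I}$. By definition $\rlp{J}$ is the class of Kan fibrations. Using the skeletal filtration of a simplicial set, one checks that every monomorphism is a retract of a relative $I$-cell complex, so that $\llp{(\rlp{I})}$ is precisely the class of monomorphisms; the first factorization is then a cofibration followed by a map in $\rlp{I}$, which I will call a trivial fibration.

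It remains to reconcile these two factorization systems with the weak equivalences, that is, to show $\rlp{I}$ is exactly the class of acyclic Kan fibrations and $\llp{(\rlp{J})}$ (the anodyne extensions) is exactly the class of acyclic cofibrations; the two halves of the lifting axiom then follow by the retract argument. One direction in each case is comparatively soft: each horn inclusion realizes to a deformation retract, so horn inclusions are acyclic cofibrations, and since the acyclic cofibrations are saturated every anodyne extension is an acyclic cofibration; dually, every trivial fibration is a simplicial homotopy equivalence and hence a weak equivalence.

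The hard part will be the converse for fibrations: that every Kan fibration which is a weak equivalence already lies in $\rlp{I}$, i.e. lifts against all monomorphisms. This is the genuine technical core, and I would prove it through the theory of minimal fibrations: every Kan fibration admits a fiberwise strong deformation retraction onto a minimal subfibration, a minimal fibration which is a weak equivalence is an isomorphism, and a fibration that is fiberwise deformation-equivalent to an isomorphism is itself a trivial fibration. Applied to an acyclic Kan fibration, whose minimal model is therefore an isomorphism, this yields the right lifting property against all monomorphisms; feeding it into the retract argument also shows every acyclic cofibration is anodyne. Granting this, the small-object factorizations become the required (cofibration, acyclic fibration) and (acyclic cofibration, fibration) factorizations and both lifting axioms hold. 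Finally, the fibrant objects are the $X$ with $X \to \ast$ a Kan fibration, namely the Kan complexes, and every object is cofibrant because $\emptyset \hookrightarrow X$ is always a monomorphism. I expect the minimal-fibration analysis to be by far the most delicate ingredient.
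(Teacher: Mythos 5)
The paper does not prove this statement at all: it is recalled as the classical Quillen--Kan model structure on simplicial sets, so there is no in-paper argument to compare yours against. Your outline is the standard textbook proof (Quillen, Gabriel--Zisman, Goerss--Jardine, Hovey Ch.~3) and is correct in all essentials: small object argument on the boundary and horn inclusions, identification of $\llp{(\rlp{I})}$ with the monomorphisms via the skeletal filtration, and minimal fibrations as the technical engine for showing that acyclic Kan fibrations lie in $\rlp{I}$, with the retract argument closing the loop. You correctly single out the minimal-fibration theory as the delicate core. The one place where you hide nontrivial work with a single word is the assertion that the acyclic cofibrations form a saturated class: closure under pushout and transfinite composition of monomorphisms that are weak equivalences is itself a gluing-lemma argument (typically run by realizing to closed cofibrations of spaces and using that $\lvert - \rvert$ preserves the relevant colimits), and it is needed before you can conclude that anodyne maps are weak equivalences. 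That is a standard lemma rather than a gap, but in a full write-up it deserves the same explicit treatment you give the minimal fibrations.
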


\begin{example}
    Applying \cref{prop-overunder} to $*\in \sSet$ gives a model structure on the undercategory $\sSet_{*/}$, which by definition is the category of pointed simplicial sets $\sSet_*$.
\end{example}

We will also need the standard model structures on non-negative (co)chain complexes. 
Since we are over a field, there is no distinction of projective vs. injective model structure. 

First, we define some objects. For $n\in \Z$ define the \emph{sphere} $S^n\in \Ch^*$ to be 
the cochain complex which has $\Q$ in degree $n$ and is zero elsewhere.
Define the \emph{disk} $D^n\in \Ch^*$ to be the cochain complex which has $\Q$
in degrees $n-1$ and $n$ with the identity between them as differential, 
and is zero elsewhere. In chain complexes we make the same definitions, only the differential in the disk goes down instead of up.

There are obvious inclusion maps $S^n\to D^n$ in cochain complexes and maps $S^{n-1}\to D^n$ in chain complexes. %
We will also consider the inclusions of the zero (co)chain complex into disks.

\begin{theorem}  
\label{prop-model-chz} \cite[4.4]{Bousfield--CosimplicialResolutions},
\cite[1.6]{Goerss--Schemmerhorn}, %
\cite[Section 5.1]{Fresse-Teichmuller2}%
There is a model structure on $\Chz$ whose weak equivalences are the quasi-isomorphisms, 
the cofibrations are the degreewise injections in positive degrees, and the fibrations are the degreewise surjections.
It is cofibrantly generated, with generating cofibrations given by the inclusions $S^n\to D^n$ in degrees $n\geq 1$ as well as the map $S^0\to 0$, 
and with generating acyclic cofibrations given by the inclusions $0\to D^n$ for $n\geq 1$.
\end{theorem}

\begin{theorem}  
\label{prop-model-chaz}
\cite[II.4.(5)]{Quillen-HomotopicalAlgebra}, %
\cite[7.2, 7.19]{dwyer-spalinski}, 
\cite[1.5, 3.4]{Goerss--Schemmerhorn} %
There is a model structure on $\Chaz$ whose weak equivalences are the quasi-isomorphisms, 
the cofibrations are the degreewise injections, and the fibrations are the degreewise surjections in positive degrees.
It is cofibrantly generated, with generating cofibrations given by the inclusions $S^{n-1}\to D^n$ in degrees $n\geq 1$ as well as the map $0\to S^0$, 
and with generating acyclic cofibrations given by the inclusions $0\to D^n$ for $n\geq 1$.
\end{theorem}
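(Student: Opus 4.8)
The plan is to invoke the recognition theorem for cofibrantly generated model categories (e.g.\ \cite[Thm.\ 2.1.19]{Hovey-ModelCategories}) with $I=\{S^{n-1}\to D^n\}_{n\geq 1}\cup\{0\to S^0\}$ as generating cofibrations, $J=\{0\to D^n\}_{n\geq 1}$ as generating acyclic cofibrations, and $W$ the class of quasi-isomorphisms. Since $\Chaz$ is a category of non-negatively graded $\Q$-modules it is complete, cocomplete and locally presentable, so every object is small and both $I$ and $J$ permit the small object argument; moreover $W$ satisfies two-out-of-three and is closed under retracts because $H_*$ is a functor into graded vector spaces. The content therefore reduces to identifying the lifting classes $\rlp{J}$ and $\rlp{I}$ and checking the compatibility conditions of the recognition theorem.

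The key computation is the characterization of $\rlp{I}$. Unwinding the lifting problem against $S^{n-1}\to D^n$, a map $f\colon X\to Y$ lies in $\rlp{I}$ precisely when $f_0$ is surjective (this is lifting against $0\to S^0$) and, for every cycle $z\in X_{n-1}$ and every $w\in Y_n$ with $f(z)=dw$, there is $u\in X_n$ with $du=z$ and $f(u)=w$. I would extract two facts from this. First, specializing to $w=0$ shows that every cycle of $\ker f$ in degree $n-1$ is a boundary of $\ker f$, so $\ker f$ is acyclic. Second, a short induction on degree (base case $f_0$ surjective; inductive step using acyclicity of $\ker f$ to correct a preimage of $dw$ into a genuine cycle $z$, then lifting $w$) shows that $f$ is surjective in all degrees. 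Conversely, if $f$ is a surjective quasi-isomorphism then $\ker f$ is acyclic by the long exact sequence of $0\to\ker f\to X\to Y\to 0$, and the desired $u$ is produced by choosing any $u_0$ with $f(u_0)=w$ and subtracting a boundary in $\ker f$ correcting $du_0-z$. Hence $\rlp{I}$ is exactly the class of surjective quasi-isomorphisms, while the analogous, easier unwinding gives $\rlp{J}=\{$degreewise surjections in positive degrees$\}$, the intended fibrations.

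With these identifications the recognition-theorem hypotheses are routine. Relative $J$-cell complexes are built from the $0\to D^n$ by coproducts, pushouts and transfinite composition, hence are injective quasi-isomorphisms (each attaches an acyclic summand $D^n$); they lie in $\llp{(\rlp{I})}$ because each $0\to D^n$ lifts against any surjection and $\llp{(\rlp{I})}$ is closed under these constructions, so $J\text{-cell}\subseteq W\cap\llp{(\rlp{I})}$. The inclusion $\rlp{I}\subseteq W\cap\rlp{J}$ is immediate from the two characterizations. For the remaining condition I would verify $W\cap\rlp{J}\subseteq\rlp{I}$: a quasi-isomorphism surjective in positive degrees is automatically surjective in degree $0$ (given $y\in Y_0$, correct a homology preimage by a boundary lifted through surjectivity in degree $1$), hence a surjective quasi-isomorphism, hence in $\rlp{I}$. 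The recognition theorem then produces a cofibrantly generated model structure with weak equivalences $W$, fibrations $\rlp{J}$ and cofibrations $\llp{(\rlp{I})}$.

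Finally I would identify the cofibrations with the degreewise injections. Every map of $I$ is a monomorphism, and monomorphisms are stable under pushout and transfinite composition in the abelian category $\Chaz$, so all $I$-cell complexes and their retracts are degreewise injective; conversely, since we work over a field, any degreewise injection splits in each degree and one checks directly that it has the left lifting property against every surjective quasi-isomorphism, so it is a cofibration. The main obstacle is precisely the two-sided characterization of $\rlp{I}$ as the surjective quasi-isomorphisms — in particular extracting acyclicity of $\ker f$ (via the $w=0$ case) and surjectivity in every degree from the lifting conditions; once this is in hand, the rest is formal bookkeeping with the recognition theorem.
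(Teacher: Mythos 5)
Your proposal is correct and is essentially the standard argument: the paper itself offers no proof of this classical statement, only citations to Quillen, Dwyer--Spalinski and Goerss--Schemmerhorn, and those references establish it exactly as you do, by identifying $\rlp{J}$ with the surjections in positive degrees and $\rlp{I}$ with the surjective quasi-isomorphisms (equivalently, surjections with acyclic --- hence, over $\Q$, contractible --- kernel) and then feeding this into the recognition theorem. The only step you leave as ``one checks directly'' (that a degreewise injection lifts against every surjective quasi-isomorphism) is precisely where the field hypothesis enters, via a contracting homotopy of the acyclic kernel, and is carried out in the cited sources.
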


\subsection{Some rational homotopy theory}

We now recall some terminology from rational homotopy theory. Let $X$ be a connected pointed %
Kan complex. 

\begin{itemize}
    \item We say $X$ is \emph{nilpotent} if $\pi_1(X)$ is a nilpotent group 
    and $\pi_n(X)$ is a nilpotent $\pi_1(X)$-module for all $n\geq 2$. 
\item A nilpotent $X$ is \emph{rational} if the integral homology groups in degrees $\geq 1$ 
are all uniquely divisible (equivalently, $\Q$-vector spaces).
\item A nilpotent $X$ is \emph{of finite type} if its rational homology vector spaces  are finite-dimensional.
\end{itemize}
If $X$ is not pointed, we say it is \emph{nilpotent} or \emph{rational} or \emph{of finite type} 
if it is so for some (hence any) choice of base point. 

The above terminology extends to arbitrary simplicial sets by taking weakly equivalent Kan complexes. Therefore, each of these three classes of simplicial sets is closed under weak equivalences.

\subsection{Some equivariant homotopy theory}

We now recall some elementary results from equivariant homotopy theory. The \emph{orbit category} $\OG$ of the discrete group $G$ has as objects the subgroups $K\leq G$, 
and the morphisms from $K$ to $K'$ are the set-theoretic $G$-equivariant maps $G/K\to G/K'$. We now recall Elmendorf's theorem, which explains the appearance of $\OG$-indexed presheaves in the world of genuine equivariant homotopy theory.

Let $\GsSet$ be the category of $G$-objects in $\sSet$, i.e. $\Fun(BG, \sSet)$ where $BG$ is the category with one object associated to the group $G$. Defining a weak equivalence (resp. fibration) to be a map which is a weak equivalence (resp. fibration) in $\sSet$ under every $H$-fixed point functor, $H\leq G$ defines the \emph{genuine model structure} on $\GsSet$ \cite{MarcStephan}. This is what is commonly known as the genuine equivariant homotopy theory of $G$-Kan complexes. %

\begin{theorem}[Elmendorf] \label{thm-elmendorf} \cite{Elmendorf}, \cite[Thm. 2.10, Ex. 2.14]{MarcStephan} %
There is a Quillen equivalence $\sSet^\OGop \rightleftarrows \GsSet$, where the functor category $\sSet^\OGop$ has the projective model structure. The left Quillen adjoint is evaluation at the trivial subgroup, and the right Quillen adjoint takes a $G$-space to its $\OG$-indexed presheaf of fixed points.
\end{theorem}

Thus, we can consider $\sSet^\OGop$ with the projective model structure as giving us the genuine $G$-equivariant homotopy theory of spaces. %
Under the above Quillen equivalence, %
the presheaves of nilpotent rational Kan complexes of finite type correspond to the $G$-simplicial sets all of whose fixed point spaces are nilpotent, rational and of finite type. %

\section{Partial Quillen equivalences} \label{sect-partial}

We now introduce the key notion of ``partial Quillen equivalence'', which is similar to that in \cite[Page 12]{Sciarappa2017} but more general. %

\subsection{Definition and basic properties}

We first recall the notion of derived (co)unit. Let $L:\C\rightleftarrows \D:R$ be a Quillen adjunction between model categories. Let $q_X:QX\to X$ be a cofibrant replacement of the object $X\in \C$, i.e. $q_X$ is an acyclic fibration and $QX$ is cofibrant; let $p_Y:Y\to PY$ be a fibrant replacement of the object $Y\in \D$, i.e. $p_Y$ is an acyclic cofibration and $PY$ is fibrant. (We will henceforth use these notations for (co)fibrant replacements without introducing them each time.) %
Then a \emph{derived unit} of $L\dashv R$ at the object $X$ is a map of the form
\[\xymatrix{X\ar[r]^-{\eta_X} & RLX \ar[r]^-{Rp_{LX}} & RPLX,}\]
and a \emph{derived counit} of $L\dashv R$ at the object $Y$ is a map of the form
\[\xymatrix{LQRY \ar[r]^-{Lq_{RY}} & LRY \ar[r]^-{\epsilon_Y} & Y.
} \]

We will use the following construction several times. If $f:A\to B$ is a map in $\C$, then we can construct a map $Pf:PA\to PB$ such that $Pf\circ p_A=p_B\circ f$, using the lifting properties of $\C$:
\begin{equation}\label{lifts-of-maps}\tag{$\clubsuit$}\begin{tikzcd}
	A & PB \\
	PA & {*}
	\arrow["{p_B\circ f}", from=1-1, to=1-2]
	\arrow["{p_A}"', tail, from=1-1, to=2-1]
	\arrow[two heads, from=1-2, to=2-2]
	\arrow["Pf"', dotted, from=2-1, to=1-2]
	\arrow[from=2-1, to=2-2]
\end{tikzcd}\end{equation}

\begin{remark} \label{rmk-derived-unit}
The definition of a derived (co)unit depends on the choice of (co)fibrant replacement only in an inessential way: If $p'_{LX}: LX\to P'LX$ is another fibrant replacement, consider the following lifting problem.
\[\begin{tikzcd}
	LX & {P'LX} \\
	PLX & {*}
	\arrow["{p'_{LX}}", "\sim"', from=1-1, to=1-2]
	\arrow["{p_{LX}}"',"\sim", tail, from=1-1, to=2-1]
	\arrow[two heads, from=1-2, to=2-2]
	\arrow["w"', dotted, from=2-1, to=1-2]
	\arrow[from=2-1, to=2-2]
\end{tikzcd}\]
We get a weak equivalence $w:PLX\to P'LX$ between fibrant objects, so $Rw$ is a weak equivalence; hence one derived unit at $X$ is a weak equivalence if and only if the other is. Dually for the derived counit. All conditions below imposed on derived (co)units are therefore independent of these choices. Thus, we will speak of ``the'' derived (co)unit.
\end{remark}

\begin{definition} \label{def-partial}
    Let $L:\C\rightleftarrows \D:R$ be a Quillen adjunction.
    Suppose $\C_0\subset \C$ and $\D_0\subset \D$ are full subcategories such that the following hold:
    \begin{enumerate}
    \item Every $X\in \C_0$ admits a cofibrant replacement $q_X:QX\to X$ with
$QX\in \C_0$, and every $Y\in \D_0$ admits a fibrant replacement
$p_Y:Y\to PY$ with $PY\in \D_0$.
           \item If $X\in \C_0$ is cofibrant, then $LX\in \D_0$, and if $Y\in \D_0$ is fibrant, then $RY\in \C_0$.  %
        \item If $X\in \C_0$ is cofibrant, then the derived unit $X\to RPLX$ is a weak equivalence.
        Dually, if $Y\in \D_0$ is fibrant, then the derived counit $LQRY\to Y$ is a weak equivalence. %
    \end{enumerate}
    Then we say $L:\C\rightleftarrows \D:R$ is a \emph{partial Quillen equivalence on the subcategories $\C_0$ and $\D_0$}.
\end{definition}

\begin{remark}
    For $\C_0=\C$ and $\D_0=\D$, we recover the notion of a Quillen equivalence.
\end{remark}

We can summarize the idea behind this definition as follows: An adjunction is an equivalence if and only if its unit and counit maps are isomorphisms; a Quillen adjunction is a Quillen equivalence if and only if its derived unit (resp. counit) maps %
are equivalences; and generalizing that to model categories endowed with subcategories which interact well with the Quillen adjunction gives the definition of a partial Quillen equivalence. 

\begin{remark} \label{rmk-condition-one}
Condition (1) on the subcategory $\C_0$ holds as soon as $\C_0$ is closed under weak equivalences, in the sense that if $X\in \C$ is weakly equivalent %
        to $X_0$ with $X_0\in \C_0$, then $X\in \C_0$ too. It also holds if every object of $\C$ is cofibrant. Dually, Condition (1) on the subcategory $\D_0$ holds as soon as $\D_0$ is closed under weak equivalences or if every object in $\D$ is fibrant.
\end{remark}

We now give a lemma which proves that if one has a partial Quillen equivalence where every object in $\C$ is cofibrant and every object in $\D$ is fibrant, then enlarging the subcategories as little as possible so they are closed under weak equivalences also gives a partial Quillen equivalence. In other words: we now consider the closure of $\C_0\subset \C$ under weak equivalences. We denote it $\widetilde{\C_0}$, and we can describe it as the full subcategory of $\C$ on the objects which are weakly equivalent to objects in $\C_0$. We define $\widetilde{\D_0}$ analogously.

\begin{lemma} \label{lem-saturation} Let $L:\C\rightleftarrows \D:R$ be a partial Quillen equivalence on the subcategories $\C_0\subset \C$, $\D_0\subset \D$. Assume every object in $\C$ is cofibrant and every object in $\D$ is fibrant. Then $L\dashv R$ is also a partial Quillen equivalence on the subcategories $\widetilde{\C_0}$ and $\widetilde{\D_0}$.
\begin{proof} Condition (1) is automatic since every object in $\C$ is cofibrant and every object in $\D$ is fibrant.

Let us prove Condition (2). Let $X\in \widetilde{\C_0}$. Then $X\simeq X_0$ with $X_0\in \C_0$ via a zig-zag of weak equivalences in $\C$. Applying $L$ to it gives a zig-zag of weak equivalences in $\D$ since $L$ preserves weak equivalences between cofibrant objects, so $LX\simeq LX_0\in \D_0$, proving $LX\in \widetilde{\D_0}$. The condition on $R$ is proven dually.

We now prove Condition (3). Let $X\in \widetilde{\C_0}$. Then $X\simeq X_0$ with $X_0\in \C_0$ via a zig-zag of weak equivalences in $\C$. Consider the derived units of all the objects in the zig-zag. We connect them via a diagram: we draw the beginning of it corresponding to the first step $w_1$ in the zig-zag.
\[\begin{tikzcd}
	X & RLX & RPLX \\
	{X_1} & {RLX_1} & {RPLX_1}
	\arrow["{\eta_X}", from=1-1, to=1-2]
	\arrow["{Rp_{LX}}", from=1-2, to=1-3]
	\arrow["{w_1}", "\sim"', from=2-1, to=1-1]
	\arrow["{\eta_{X_1}}"', from=2-1, to=2-2]
	\arrow["{RLw_1}"', from=2-2, to=1-2]
	\arrow["{Rp_{LX_1}}"', from=2-2, to=2-3]
	\arrow["{RPLw_1}"', "\sim", from=2-3, to=1-3]
\end{tikzcd}\]
Here the map $PLw_1$ is defined as in (\ref{lifts-of-maps}). In the ladder diagram above, if one of the bottom or top compositions is a weak equivalence, then so is the other, by 2-out-of-3. Extending the ladder downwards along the zig-zag, we reach $X_0$ whose derived unit is a weak equivalence. The condition of being a weak equivalence extends upwards and thus the derived unit of $X$ is also a weak equivalence. The condition on $R$ is proven dually.
\end{proof}
\end{lemma}

We now prove the most important feature of a partial Quillen equivalence, namely: the induced derived adjunction $\Ho(\C)\rightleftarrows \Ho(\D)$ 
restricts to an equivalence of categories between the full subcategories of $\Ho(\C)$ and 
$\Ho(\D)$ on the objects of $\C_0$ and $\D_0$, respectively. This is an adaptation of a well-known characterization of Quillen equivalences, but we give a full proof because we were unable to find a complete reference for that fact other than \cite[Prop. 1.3.13]{Hovey-ModelCategories}, which assumes that model categories come endowed with functorial factorizations. %

\begin{proposition} Let $L:\C\rightleftarrows \D:R$ be a Quillen adjunction. Let $\C_0\subset \C$ and $\D_0\subset \D$ be full subcategories. Denote by $h\C_0$ (resp. $h\D_0$) the full subcategory of $\Ho(\C)$ (resp. $\Ho(\D)$) on the objects of $\C_0$ (resp. $\D_0$).

Assume that (1) and (2) in \cref{def-partial} hold. Then the derived adjunction $\Ho(\C)\rightleftarrows \Ho(\D)$ restricts to an adjunction $h\C_0\rightleftarrows h\D_0$, and the following are equivalent:
\begin{enumerate}
    \item[a)] Condition (3) in \cref{def-partial} holds, so $L\dashv R$ is a partial Quillen equivalence on $\C_0$ and $\D_0$. %
    \item[b)] A map $LX\to Y$ is a weak equivalence if and only if its transpose $X\to RY$ is a weak equivalence, for all $X\in \C_0$ cofibrant and $Y\in \D_0$ fibrant.
        \item[c)] The adjunction $h\C_0\rightleftarrows h\D_0$ is an adjoint equivalence.
\end{enumerate}
\end{proposition}
\begin{proof} We can compute the left derived functor $\Ho(\C)\to \Ho(\D)$ on $X\in \C$ as $L(QX)$ where $QX$ is a cofibrant replacement of $X$ inside $\C_0$, by (1). If $X\in \C_0$, then by (2) we have that $L(QX)\in \D_0$. Dually for $R$, so %
the derived adjunction restricts to the desired subcategories.

We now prove (a) implies (b). Let $f:LX\to Y$ be a weak equivalence, where $X\in \C_0$ is cofibrant and $Y\in \D_0$ is fibrant. Let $p_Y:Y\to PY$ be a fibrant replacement. %
By (\ref{lifts-of-maps}), we have a map $Pf:PLX\to PY$ such that $Pf\circ p_{LX}= p_Y\circ f$. By 2-out-of-3, it is a weak equivalence. We have the following commutative diagram in $\C$, where the top curved arrow is the derived unit at $X$, which is a weak equivalence by assumption:
\[\begin{tikzcd}
	X & RLX & RPLX \\
	& RY & RPY
	\arrow["{\eta_X}", from=1-1, to=1-2]
	\arrow["\sim", curve={height=-22pt}, from=1-1, to=1-3]
	\arrow["{Rp_{LX}}", from=1-2, to=1-3]
	\arrow["Rf"', from=1-2, to=2-2]
	\arrow["RPf","\sim"', from=1-3, to=2-3]
	\arrow["{Rp_Y}"', "\sim", from=2-2, to=2-3]
\end{tikzcd}\]
The right vertical map and the bottom horizontal map are weak equivalences because $R$ takes weak equivalences between fibrant objects to weak equivalences. Now, the composition $Rf\circ \eta_X:X\to RY$ is the transpose of $f$: it is a weak equivalence by 2-out-of-3.

Dually, if we are given a weak equivalence $X\to RY$ with $X\in \C_0$ cofibrant and $Y\in \D_0$ fibrant, then its transpose will be a weak equivalence thanks to the derived counit at $Y$ being a weak equivalence. \medskip

To see (b) implies (a), let $X\in \C_0$ be a cofibrant object. Since $LX\in \D_0$, we have a fibrant replacement $p_{LX}:LX\to PLX$ such that $PLX\in \D_0$. Therefore, by hypothesis, the transpose of $p_{LX}$ is a weak equivalence. But this map is exactly the derived unit of $X$. The assertion for the derived counit is proven dually. \medskip

We now prove that (a) is equivalent with (c). The adjunction $h\C_0\rightleftarrows h\D_0$ is an adjoint equivalence if and only if its unit and counit are isomorphisms. The unit at $X\in \C_0$ is the composition in $h\C_0$:
\[\xymatrix{X \ar[r]^-{q_X^{-1}}_-\cong & QX \ar[r]^-{\eta_{QX}} & RLQX \ar[r]^-{Rp_{LQX}} & RPLQX.}\]
Here $QX\in \C_0$ and the fibrant replacement $PLQX$ of $LQX\in \D_0$ are chosen
using Condition (1), so that $RPLQX\in \C_0$ by (2) and the displayed composite
is a morphism of $h\C_0$; the intermediate objects need not lie in $\C_0$, which
is harmless since $h\C_0\subset \Ho(\C)$ is full.

This map is an isomorphism in $h\C_0$ if and only if the composition $Rp_{LQX}\circ \eta_{QX}$ is a weak equivalence. We will now check that for this to be true it is necessary and sufficient for the derived unit at $X$ to be a weak equivalence for all cofibrant $X$. Sufficiency is obvious, %
so we now check necessity.

By (\ref{lifts-of-maps}), %
there exists a map $PLq_X:PLQX\to PLX$ such that $PLq_X\circ p_{LQX}=p_{LX} \circ Lq_X$. Applying $R$ to this gives the right square in the following commutative diagram: %
\[\begin{tikzcd}
	QX & RLQX & RPLQX \\
	X & RLX & RPLX
	\arrow["{\eta_{QX}}", from=1-1, to=1-2]
	\arrow["{q_X}","\sim"', from=1-1, to=2-1]
	\arrow["{Rp_{LQX}}", from=1-2, to=1-3]
	\arrow["{RLq_X}", from=1-2, to=2-2]
	\arrow["{RPLq_X}", "\sim"', from=1-3, to=2-3]
	\arrow["{\eta_X}"', from=2-1, to=2-2]
	\arrow["{Rp_{LX}}"', from=2-2, to=2-3]
\end{tikzcd}\]
The %
right vertical arrow is a weak equivalence because left (resp. right) Quillen functors send weak equivalences between cofibrant (resp. fibrant) objects to weak equivalences. The bottom composition is the derived unit at $X$. By 2-out-of-3, if the top row is a weak equivalence then the bottom row is a weak equivalence, which is what we wanted to prove. The counit is treated dually.
\end{proof}

We now observe that composition of partial Quillen equivalences is a partial Quillen equivalence. The proof is simple so we omit it.

\begin{lemma} \label{lem-composition-partial}
    Let %
$\begin{tikzcd}
	\C & \D & \E
	\arrow[""{name=0, anchor=center, inner sep=0}, "{L_1}", from=1-1, to=1-2]
	\arrow[""{name=1, anchor=center, inner sep=0}, "{R_1}", shift left=3, from=1-2, to=1-1]
	\arrow[""{name=2, anchor=center, inner sep=0}, "{L_2}", from=1-2, to=1-3]
	\arrow[""{name=3, anchor=center, inner sep=0}, "{R_2}", shift left=3, from=1-3, to=1-2]
	\arrow["\dashv"{anchor=center, rotate=-90}, draw=none, from=0, to=1]
	\arrow["\dashv"{anchor=center, rotate=-90}, draw=none, from=2, to=3]
\end{tikzcd}$ be two Quillen adjunctions. Suppose $\C_0\subset \C$, $\D_0\subset \D$, $\E_0\subset \E$ are full subcategories and $L_1\dashv R_1$, $L_2\dashv R_2$ are partial Quillen equivalences on them. Then the composed adjunction $L_2\circ L_1 \dashv R_1\circ R_2$ is a partial Quillen equivalence on $\C_0$ and $\E_0$.
\end{lemma}

\subsection{Partial Quillen equivalences on functor categories}

We will now explain how a partial Quillen equivalence induces a partial Quillen equivalence on functor categories. Recall that if $\C$ is a model category and $I$ is a small category, then the \emph{projective model structure} on the functor category $\C^I$ has component-wise fibrations and weak equivalences, whereas the \emph{injective model structure} has component-wise cofibrations and weak equivalences. We will use the following existence theorems: The projective model structure on $\C^I$ exists provided $\C$ is cofibrantly generated \cite[Thm. 11.6.1]{Hirschhorn}, and the injective model structure on $\C^I$ exists provided $\C$ is \emph{combinatorial} \cite[Sect. A.2.6]{LurieHTT}, which means $\C$ is cofibrantly generated and locally presentable.

If $F:\C\to \D$ is a functor and $I$ is a small category, we let $F_*:\C^I\to \D^I$ denote the functor given by post-composition with $F$.

\begin{lemma}
\label{lem-adjunction-on-fun}
Let $L:\C\rightleftarrows \D:R$ be a Quillen adjunction (resp. Quillen equivalence) of model categories. 
Let $I$ be a small category. 
Then there is an induced adjunction $L_*:\C^I\rightleftarrows \D^I:R_*$ by post-composition, and 
it is a Quillen adjunction (resp. Quillen equivalence) between the projective or injective model structures, provided they exist. 
\end{lemma}

\begin{proof} This is straightforward, see  e.g. in \cite[A.2.8.6]{LurieHTT} or \cite[11.6.1, 11.6.5]{Hirschhorn} for the projective case.
\end{proof}

We will now prove that this induced Quillen adjunction is a partial Quillen equivalence if the original adjunction is a partial Quillen equivalence, as observed in \cite[Lem. 5.1]{Sciarappa2017}, although our proof is a bit different and our assumptions more general. %
This is not a surprising result, %
but we have not found any published proof for it, so we prove it in detail. It is an important step in the categorical adaptation of algebraic models to the equivariant setting.

\begin{lemma}
\label{lem-sciarappa} 
Let $I$ be a small category. 
Let $L: \C\rightleftarrows \D: R$ be a partial Quillen equivalence on the subcategories $\C_0$, $\D_0$. 
Assume moreover that $\C_0^I$ and $\D_0^I$ satisfy Condition (1) of \cref{def-partial}. 
Assume the projective model structures $\C^I_\proj$, $\D^I_\proj$ exist. Then the induced adjunction $L_*: \C^I_\proj \rightleftarrows \D^I_\proj:R_*$ is a partial Quillen equivalence on the subcategories $\C_0^I$, $\D_0^I$.

\begin{proof}
The first condition holds by hypothesis. %
The second condition follows from cofibrations in $\C^I$ being in particular levelwise cofibrations \cite[Prop. 11.6.3]{Hirschhorn}, which implies that cofibrant objects in $\C^I$ are levelwise cofibrant. The corresponding statements for fibrations are also true and even characterize them, by construction.

We need to check the third condition, which states: if $Y\in \D_0^I$ is a fibrant object, then the derived counit is a weak equivalence, and dually the derived unit of a cofibrant object in $\C_0^I$ is a weak equivalence. We give the proof of the first statement, the second one is proven dually.

Let $Y\in \D_0^I$ be a fibrant object, i.e.  $Y_i$ is fibrant for all $i\in I$. %
Let $\underline{q}_X:\underline{Q}X\to X$ denote cofibrant replacements for $X\in \C^I$. We want to prove that the composition
    \[\xymatrix{L_*\underline{Q}(R_*Y) \ar[r]^-{L_* \underline{q}_{R_*Y}} & L_*R_*Y \ar[r]^-{(\epsilon_*)_Y} & Y}\]
    is a weak equivalence in $\D^I$. Here $\epsilon_*$ denotes the counit of $L_*\dashv R_*$, which is merely component-wise the counit $\epsilon$ of $L\dashv R$. Therefore, we need to check  that
        \[\xymatrix{L(\underline{Q}(R_*Y)_i) \ar[r]^-{L (\underline{q}_{R_*Y})_i} & LRY_i \ar[r]^-{\epsilon_{Y_i}} & Y_i } \]
        is a weak equivalence in $\D$ for all $i\in I$.

But now note that we can take this map as the derived counit of $L\dashv R$ at $Y_i$, which is a weak equivalence by assumption. Indeed, $(\underline{q}_{R_*Y})_i:\underline{Q}(R_*Y)_i \to RY_i$ is an acyclic fibration and $\underline{Q}(R_*Y)_i$ is cofibrant in $\C$, since $\underline{Q}(R_*Y)$ is cofibrant. %
\end{proof}
\end{lemma}

\begin{remark} \label{rmk-homotopically-ff}In the proof of \cref{lem-sciarappa}, to prove that the derived unit of $L_*\dashv R_*$ on a cofibrant object is a weak equivalence, we use that the same holds true for $L\dashv R$: we do not need to use that the derived counit of $L\dashv R$ is a weak equivalence on fibrant objects for this. This proves an additional statement:

First, say $L$ is a \emph{partially homotopically fully faithful functor on the subcategories $\C_0$ and $\D_0$} if conditions (1) and (2) from \cref{def-partial} are satisfied, as well as the derived unit part of (3). Then we have proven the following: if $L$ is a partially homotopically fully faithful functor on $\C_0$ and $\D_0$ and $\C_0^I, \D_0^I$ satisfy Condition (1), then $L_* $ is partially homotopically fully faithful on $\C_0^I$ and $\D_0^I$.
\end{remark}

\begin{remark} \label{rmk-inheritance}
Condition (1) for $\C_0^I$ does not follow from Condition (1) for
$\C_0$ in general: a cofibrant replacement of a functor in $\C_0^I$ need not be in $\C_0^I$. It does follow %
if $\C_0$ is closed under weak equivalences (hence so is $\C_0^I$). On the other hand, if every object in $\C$ is cofibrant, this does not imply that every object in $\C^I$ is cofibrant in general, %
and Condition (1) for $\C_0^I$ in that case needs to be assumed.  Happily, Condition (1) for $\D_0^I$ follows both from $\D_0$ being closed under weak equivalences or every object of $\D$ being fibrant, since then every object in $\D^I$ is fibrant.

In conclusion, if the conditions from \cref{rmk-condition-one} hold for $\C_0$ or $\D_0$, then they do for $\C_0^I$ and $\D_0^I$ except possibly in the situation where all the objects of $\C$ are cofibrant. In that case, we can still say something, as we will see in the following lemma.
\end{remark}

In the following variation of \cref{lem-sciarappa}, we use the notations $\widetilde{\C_0}, \widetilde{\D_0}$ as in \cref{lem-saturation}.

\begin{lemma} \label{lem-sciarappa2} Let $L:\C\rightleftarrows \D:R$ be a partial Quillen on the subcategories $\C_0$, $\D_0$. Assume every object in $\C$ is cofibrant and every object in $\D$ is fibrant. Then the induced adjunction $L_*:\C^I_\proj \rightleftarrows \D^I_\proj:R_*$ is a partial Quillen equivalence on the subcategories $\widetilde{C_0}^I$, $\widetilde{\D_0}^I$.
\begin{proof} By \cref{lem-saturation}, $L\dashv R$ is a partial Quillen equivalence on the subcategories $\widetilde{\C_0}$ and $\widetilde{\D_0}$. In order to apply \cref{lem-sciarappa} to them, we need to check that $\widetilde{\C_0}^I$ and $\widetilde{\D_0}^I$ satisfy Condition (1) of \cref{def-partial}. It suffices to check these subcategories are closed under weak equivalences. This follows from weak equivalences in the projective model structure being component-wise,  since $\widetilde{\C_0}$ and $\widetilde{\D_0}$ are closed under weak equivalences.
    \end{proof}

\end{lemma}

\section{The cdgl models}
\label{sec:Lie}

We now turn to Lie models for $G$-equivariant spaces. We first review the main result from \cite{LieModelsInTopology}: we define the category of complete dg Lie algebras and we explain how it models rational spaces via a partial Quillen equivalence. We then extend this to the equivariant setting.

A Lie algebra object in $\Ch_*$ is called an (unbounded) \emph{differential graded Lie algebra}, or \emph{dgl} for short. 

\begin{definition} Let $L$ be %
a dgl. We say it is \emph{complete} 
      if it is moreover endowed with 
a descending filtration by differential Lie ideals
\[L=F^1L\supset \cdots \supset F^nL \supset F^{n+1}L\supset \cdots\] which is compatible with the bracket and for which the natural map
$L \to \varprojlim L/F^n L$ is a dgl isomorphism.

A dgl morphism $f:L\to L'$ between complete dg Lie algebras is a \emph{complete dg Lie morphism} if it is compatible  with the filtrations. The complete dg Lie algebras together with the complete dg Lie morphisms form a category denoted $\cDGL$. We will abbreviate ``complete dg Lie algebra'' to ``cdgl''.

A cdgl is called \emph{connected} if it is concentrated in non-negative degrees. %
\end{definition}

In \cite[Ch. 6]{LieModelsInTopology}, the authors define a cosimplicial cdgl $\mathfrak{L}_\bullet$. This induces in the standard way a ``nerve and realization'' adjunction $\mathcal{L}:\sSet \rightleftarrows \cDGL: \langle - \rangle$.

\begin{theorem}\cite[Ch. 8]{LieModelsInTopology}
\label{teo: adjunction sSet cdgl}
The right-transferred model structure on $\cDGL$ via the functor $\langle - \rangle:\cDGL\to \sSet$ exists and is cofibrantly generated. %
In particular, %
the adjunction
\[
\mathcal L : \sSet \rightleftarrows \cDGL : \langle - \rangle
\]
becomes a Quillen adjunction. Every object in $\cDGL$ is fibrant.
\end{theorem}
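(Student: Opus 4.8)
The plan is to obtain the model structure as a cofibrantly right-transferred structure along $\langle-\rangle$ from the standard model structure on $\sSet$, by invoking the recognition (transfer) theorem recalled in \cref{rmk-cofgen}(2). Recall that $\sSet$ is cofibrantly generated by the boundary inclusions $I=\{\partial\Delta^n\hookrightarrow\Delta^n\}$ and the horn inclusions $J=\{\Lambda^n_k\hookrightarrow\Delta^n\}$. The candidate transferred data on $\cDGL$ is then: weak equivalences (resp.\ fibrations) are the maps $f$ with $\langle f\rangle$ a weak homotopy equivalence (resp.\ Kan fibration), generating cofibrations $\mathcal L(I)$, and generating acyclic cofibrations $\mathcal L(J)$. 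Once this structure is shown to exist, the Quillen adjunction assertion is automatic by the first item of \cref{rmk-cofgen}, since $\langle-\rangle$ then preserves fibrations and acyclic fibrations by the very definition of the transferred structure.

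Two of the hypotheses of the transfer theorem are routine. First, $\cDGL$ is bicomplete: limits are created by the forgetful functor to dg Lie algebras (a limit of complete objects, with the limit filtration, is again complete), while colimits are obtained by completing the underlying colimit, which exists because $\cDGL$ is a category of complete algebras with $\mathcal L$ as a left adjoint. Second, one needs the smallness hypothesis that makes the small object argument apply to $\mathcal L(I)$ and $\mathcal L(J)$ and upgrades the transfer to a cofibrant one (see \cref{item-balchin}); the presence of completions makes this point require some care, but it is handled in \cite{LieModelsInTopology} and presents no essential difficulty.

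The only genuinely nontrivial hypothesis, and the main obstacle, is the acyclicity condition: every relative $\mathcal L(J)$-cell complex $i\colon A\to B$ must be a weak equivalence, i.e.\ $\langle i\rangle$ must be a weak homotopy equivalence. I would verify this in two steps. First, $i$ is a quasi-isomorphism of the underlying complexes: each generating map $\mathcal L(\Lambda^n_k)\to\mathcal L(\Delta^n)$ is a free extension by a free complete Lie algebra on an acyclic generating complex—acyclic because the inclusion $\Lambda^n_k\hookrightarrow\Delta^n$ is a homology isomorphism, both being contractible—hence a quasi-isomorphism; and this class of maps is closed under the pushouts and transfinite compositions that build $i$, so $i$ is itself a quasi-isomorphism. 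Second, and this is where completeness is essential, the nerve $\langle-\rangle$ is homotopy invariant: a quasi-isomorphism between complete dg Lie algebras induces a weak homotopy equivalence on nerves. This is the Goldman--Millson/Getzler-type invariance of the Maurer--Cartan nerve, valid precisely because completeness renders the relevant Lie algebras pro-nilpotent; it is available from \cite{LieModelsInTopology}. Combining the two steps shows that $\langle i\rangle$ is a weak equivalence, which establishes acyclicity and hence, through the recognition theorem, the existence of the cofibrantly right-transferred model structure on $\cDGL$.
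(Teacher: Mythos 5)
Your overall scaffolding (bicompleteness, smallness, reduction to the acyclicity condition for relative $\mathcal L(J)$-cell complexes) is the right shape for a transfer argument, but the way you discharge the acyclicity condition has a genuine gap: the claim that ``a quasi-isomorphism between complete dg Lie algebras induces a weak homotopy equivalence on nerves'' is false, and no form of the Goldman--Millson theorem rescues it. Goldman--Millson/Dolgushev--Rogers invariance requires a \emph{filtered} quasi-isomorphism (one inducing quasi-isomorphisms on the associated graded pieces of the filtrations), not merely a quasi-isomorphism of underlying complexes. A counterexample sits inside this very setting: $\mathfrak{L}_0=(\widehat{\mathbb{L}}(a),d)$ with $|a|=-1$ and $da=-\tfrac12[a,a]$ is two-dimensional (spanned by $a$ and $[a,a]$) and acyclic, so $0\to\mathfrak{L}_0$ is a quasi-isomorphism of cdgls; yet $\langle 0\rangle$ is a point while $\langle\mathfrak{L}_0\rangle$ has two path components, corresponding to the Maurer--Cartan elements $0$ and $a$, which cannot be gauge equivalent because $(\mathfrak{L}_0)_0=0$. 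This is consistent with the remark following \cref{teo: adjunction sSet cdgl}: the transferred weak equivalences are \emph{not} the quasi-isomorphisms, but the maps inducing a bijection on gauge classes of Maurer--Cartan elements together with quasi-isomorphisms on the differential-perturbed component at each such element. Consequently, proving that a relative $\mathcal L(J)$-cell complex is a quasi-isomorphism of underlying complexes --- even if carried out correctly, which itself requires care since the usual filtration/spectral-sequence arguments have convergence issues when generators sit in negative degrees --- does not prove that it is a weak equivalence in the transferred sense.

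The argument actually needed, and the one carried out in \cite[Ch.~8]{LieModelsInTopology}, is of a different nature: one first identifies the transferred fibrations internally as the surjections in non-negative degrees and the weak equivalences via Maurer--Cartan data as above, and then shows directly that pushouts and transfinite compositions of the maps $\mathcal L(\Lambda^n_k)\to\mathcal L(\Delta^n)$ induce bijections on $\widetilde{\MC}$ and quasi-isomorphisms on each perturbed component. This relies on explicit deformation retractions of $\mathfrak{L}_n$ onto $\mathcal L(\Lambda^n_k)$, the Lawrence--Sullivan interval, and the gauge action --- none of which is subsumed by a homology computation of underlying complexes. Your first step (that each $\mathcal L(\Lambda^n_k)\to\mathcal L(\Delta^n)$ is a free extension on an ``acyclic'' pair of generators) is a reasonable starting point, but the bridge from there to $\langle i\rangle$ being a weak homotopy equivalence is precisely the hard content of the cited chapter and cannot be replaced by homotopy invariance of the nerve under quasi-isomorphisms.
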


\begin{remark} There is an internal description of the weak equivalences and the fibrations in this model structure on $\cDGL$, i.e. one that does not pass via the functor $\langle - \rangle$. Fibrations are simply surjections in non-negative degrees, and weak equivalences can be characterized using Maurer--Cartan elements. On connected cdgls, the weak equivalences are exactly the quasi-isomorphisms. See \cite[Thm. 8.1, Rmk. 8.6]{LieModelsInTopology} for details.
\end{remark}

Following \cite[Def. 2.2]{LieCharacterizationBKcompletion}, a connected cdgl $L$ is \emph{homologically nilpotent} if $H_0(L)$ is a nilpotent Lie algebra and acts nilpotently on $H_*(L)$. %
We now observe that the Quillen adjunction in \cref{teo: adjunction sSet cdgl} is a partial Quillen equivalence:

\begin{theorem} \label{thm-cdgl-Quillen}
The Quillen adjunction $\mathcal L : \sSet \rightleftarrows \cDGL : \langle - \rangle$ is a partial Quillen equivalence on the reduced nilpotent rational simplicial sets and the homologically nilpotent connected cdgls.
\begin{proof} This is a rewording of \cite[Thms. 0.5 and 2.12]{LieCharacterizationBKcompletion} using the language of partial Quillen equivalences. %
\end{proof}
\end{theorem}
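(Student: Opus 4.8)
The plan is to verify the three conditions of \cref{def-partial} for the Quillen adjunction $\mathcal L : \sSet \rightleftarrows \cDGL : \langle - \rangle$ of \cref{teo: adjunction sSet cdgl}, taking $\C_0 \subset \sSet$ to be the full subcategory on the $0$-reduced nilpotent rational Kan complexes and $\D_0 \subset \cDGL$ the full subcategory on the homologically nilpotent connected cofibrant cdgls. Conditions (1) and (2) are the bookkeeping part. For (1), I would argue that the defining properties are invariant under the relevant weak equivalences: nilpotency and rationality are properties of the homotopy type, while homological nilpotency and connectedness of a cdgl are read off from $H_*$ and so are invariant under quasi-isomorphism, the auxiliary ``Kan complex'' and ``cofibrant'' clauses being understood up to the appropriate (co)fibrant replacement. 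For (2), I would check that $\mathcal L$ carries a $0$-reduced nilpotent rational Kan complex to a homologically nilpotent connected cdgl and that $\langle - \rangle$ carries a homologically nilpotent connected cdgl back to a space of the prescribed type; both inclusions are part of the package established in \cite{LieCharacterizationBKcompletion}.

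The substance is condition (3). Since every object of $\sSet$ is cofibrant, the derived unit on $X \in \C_0$ is the composite $X \xrightarrow{\eta_X} \langle \mathcal L X\rangle \to \langle P\mathcal L X\rangle$, and I would identify it up to weak equivalence with the unit of the Bousfield--Kan $\Q$-completion; for an $X$ that is already rational, nilpotent and of the prescribed type this map is a weak equivalence. Dually, for a fibrant (hence fibrant--cofibrant) $Y \in \D_0$ the cofibrant replacement $Q$ in $\sSet$ is harmless, so the derived counit reduces up to weak equivalence to $\mathcal L\langle Y\rangle \xrightarrow{\epsilon_Y} Y$, which must be shown to be a quasi-isomorphism of cdgls. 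These two assertions are exactly the content of \cite[Thms. 0.5, 2.10]{LieCharacterizationBKcompletion}, so the remaining task is to match the hypotheses ``$0$-reduced nilpotent rational'' and ``homologically nilpotent connected cofibrant'' with the hypotheses under which those theorems assert the unit and counit to be equivalences.

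The hard part will be the (co)fibrancy bookkeeping, since \cref{def-partial} demands the derived unit on cofibrant $X \in \C_0$ and the derived counit on fibrant $Y \in \D_0$, whereas the cited theorems are phrased for objects of the correct homotopy type without this model-categorical packaging. I expect to handle this by exploiting that all simplicial sets are cofibrant and that $\langle - \rangle$ always lands in Kan complexes, so the fibrant replacement $P$ appearing in the derived unit can be absorbed into a weak equivalence, and on the Lie side by using the explicit description of the fibrations as surjections in non-negative degrees to control $P\mathcal L X$. Once this dictionary with \cite{LieCharacterizationBKcompletion} is in place, condition (3) follows and the theorem is proved.
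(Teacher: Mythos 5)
Your proposal is correct and matches the paper's approach: the paper's entire proof is the one-line observation that the statement is a rewording of \cite[Thms.~0.5 and 2.10]{LieCharacterizationBKcompletion} in the language of \cref{def-partial}, and your verification of conditions (1)--(3) — with the substance residing in condition (3) via the Bousfield--Kan completion identification from those same theorems — is exactly the expansion of that rewording.
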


We now apply \cref{lem-sciarappa2} to obtain new cdgl models for equivariant rational spaces.

\begin{theorem} \label{cor-cdgl}
    There is a Quillen adjunction \[\mathcal{L}_*:\sSet^\OGop_\proj \rightleftarrows \cDGL^\OGop_\proj:\langle-\rangle_*.\] It is a partial Quillen equivalence on the full subcategories of $\OG$-indexed presheaves of
    connected nilpotent rational %
    simplicial sets and the $\OG$-indexed presheaves of homologically nilpotent %
    cdgls weakly equivalent to a connected one.
    \begin{proof}
        In order to apply \cref{lem-sciarappa2}, it suffices to observe that the closure of the subcategory of nilpotent reduced simplicial sets under weak equivalences is the subcategory of nilpotent connected simplicial sets, %
and the analogous statement for the subcategory on the cdgl side is obvious. %
    \end{proof}
\end{theorem}

\begin{remark}
A cdgl is weakly equivalent to a connected cdgl if and only if it has a unique gauge equivalence class of Maurer--Cartan elements and its twisted homology at any Maurer--Cartan representative vanishes in negative degrees.
This is a straightforward consequence of the characterization of the weak equivalences of 
cdgls in \cite[Theorem 8.1]{LieModelsInTopology}.
\end{remark}

\section{The cdga models} \label{sec:bg}

We now turn to cdga models. We first fix some notation: A (unital) commutative algebra object in $\Chz$ is called a \emph{commutative differential graded algebra},
or \emph{cdga} for short. We denote their category by $\CDGA$. 
We denote by $\Q$ the cdga which has $\Q$ concentrated in degree zero.

We first review Bousfield--Gugenheim's model structure on cdgas \cite{Bousfield--Gugenheim} which models connected nilpotent Kan complexes of finite type. %
We then explain how to generalize it to the equivariant case. %

\subsection{Bousfield--Gugenheim's model structure on \texorpdfstring{$\CDGA$}{CDGA}} 

We now review %
Bousfield--Gugenheim's \cite{Bousfield--Gugenheim} work, which provides cdga models for spaces.

\begin{proposition} \label{prop-modelos-cdga}
\cite[Section 4]{Bousfield--Gugenheim}, %
\cite[Section 6.2]{Fresse-Teichmuller2}, 
\cite[5.1]{white}. 
The right-transferred model structure on $\CDGA$ via the forgetful functor $\CDGA\to \Chz$ exists and is cofibrantly generated. 
Explicitly, the weak equivalences are the quasi-isomorphisms, 
the fibrations are the degreewise surjections, and the generating (acyclic) cofibrations 
are obtained by applying the free cdga functor to the generating (acyclic) cofibrations of $\Chz$.
\end{proposition}

Before we can state the theorem linking this model structure to that of Kan complexes, we need a couple of definitions:
\begin{definition} \label{def-cdga-conventions} Let $A$ be a cdga.
    \begin{itemize}
    \item We say $A$ is \emph{connected} if the unit $\Q\to A$ induces an isomorphism $\Q\cong A^0$.
        \item We say $A$ is \emph{cohomologically connected} if the unit $\Q\to A$ induces an isomorphism $\Q\cong H^0(A)$. %
        \item We say $A$ is \emph{of finite type} if it is of finite type as a cochain complex, i.e. $A^n$ is a finite-dimensional rational vector space for all $n\geq 0$.  %
        \item If $A$ is %
        cohomologically connected, we say it is \emph{weakly of finite type} %
        if there exists a Sullivan minimal model $N\stackrel{\sim}{\to} A$ %
        such that $N$ is of finite type.\footnote{It is this condition that \cite[Page 51]{Bousfield--Gugenheim} call \emph{finite $\Q$-type}. %
        }

    \end{itemize}
\end{definition}

\begin{remarks}
\begin{enumerate}
 \item
    If $A$ is a connected cdga, then it is cohomologically connected. 
    Indeed, the unit $\eta:\Q\to A$ being a cochain map forces the differential $d:A^0\to A^1$ to be zero, so the isomorphism $\eta^0:\Q\to A^0$ induces an isomorphism $\Q \cong H^0(A)$. 
       In fact, in $\Ho(\CDGA)$ the full subcategories on the %
    cdgas which are connected or cohomologically connected are equivalent.  This is 
    essentially because their minimal models are always connected, and they exist for 
    every cohomologically connected cdga. %
        \item If two cdgas are cohomologically connected and quasi-isomorphic, then if one of them is weakly of finite type, so is the other. %
\end{enumerate}
\end{remarks}

We can now state the fundamental theorem of rational homotopy theory à la Sullivan %
in the version by Bousfield and Gugenheim. There is a simplicial cdga of \emph{piecewise linear differential forms} which induces in the standard way a ``nerve and realization'' adjunction $\sSet \rightleftarrows \CDGA^\op$, where the left adjoint is the functor of piecewise linear differential forms $\APL$ and the right adjoint is denoted by $\langle - \rangle$.

\begin{theorem} 
\label{thm-fund} 
\cite[Thm. 10.1]{Bousfield--Gugenheim} %
There is a Quillen adjunction \[\APL: \sSet\rightleftarrows \CDGA^\op:\langle - \rangle.\] 
It is a partial Quillen equivalence on the connected,
nilpotent rational %
simplicial sets of finite type and the cohomologically connected %
cdgas weakly of finite type.
\begin{proof}
Bousfield and Gugenheim do not word the result above as being a partial Quillen equivalence, but it is indeed what they prove. We note that both the subcategory of connected, nilpotent rational simplicial sets and that of cohomologically connected cdgas weakly of finite type are closed under weak equivalences. %
\end{proof}
\end{theorem}

\begin{remark} %
The broad class of ``cohomologically connected cofibrant cdgas weakly of finite type'' can be more explicitly described. 
For example, Fresse \cite[Sect. 7.3.7]{Fresse-Teichmuller2} introduces the class of \emph{nilpotent cell complexes of finite type}. In Thm. 7.3.10 he recovers the known result that one can construct a cofibrant replacement of this form for the cdga associated to a nilpotent, rational Kan
complex of finite type.
\end{remark}

\subsection{The injective model structure on \texorpdfstring{$\CDGA^\OG$}{Fun(OG,CDGA)}} %
We now apply \cref{lem-sciarappa} to cdga models, as observed in \cite{Sciarappa2017}. The contravariance of the cdga models means a bit of care is needed. %

\begin{lemma} \label{lem-cdga-combinatorial} $\CDGA$ is a combinatorial model category.
\begin{proof} We already observed $\CDGA$ is cofibrantly generated in \cref{prop-modelos-cdga}. %
To see it is locally presentable, by \cite[2.78]{adamek-rosicky}, it suffices to see that $\Chz$ is locally presentable and that the functor $\Chz\to \Chz$ defined as the composition of the free functor $\Chz\to \CDGA$ with the forgetful functor preserves filtered colimits. %
These two facts are true: see \cite[Prop. 3.10]{Beke} and \cite[Prop. 2.4.1]{Fresse-2009}. 
\end{proof}
\end{lemma}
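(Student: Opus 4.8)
The plan is to unwind the definition: a model category is \emph{combinatorial} exactly when it is cofibrantly generated and its underlying category is locally presentable. Cofibrant generation has already been recorded in \cref{prop-modelos-cdga}, where the model structure is right-transferred from $\Chz$ along the forgetful functor, with generating (acyclic) cofibrations the images under the free functor of those of $\Chz$. Hence the only real content left is the local presentability of the underlying category $\CDGA$.

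First I would exploit the algebraic nature of $\CDGA$. The free cdga functor $F:\Chz\to\CDGA$ is left adjoint to the forgetful functor $U:\CDGA\to\Chz$, and this adjunction is monadic, so $\CDGA$ is equivalent to the category of algebras over the monad $T=UF$ on $\Chz$. This reduces the problem to a standard recognition principle for categories of algebras: if $\mathcal{A}$ is locally presentable and $T$ is a monad on $\mathcal{A}$ whose underlying endofunctor preserves filtered colimits, then the category of $T$-algebras is again locally presentable. I would cite such a result rather than reprove it.

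It then remains to verify the two hypotheses. That $\Chz$ is locally presentable is standard, being a category of non-negatively graded chain complexes of $\Q$-vector spaces, and I would simply invoke this. The genuinely delicate point, and the step I expect to be the main obstacle, is that the endofunctor $T=UF$ preserves filtered colimits. Here I would compute $T$ explicitly as the graded-symmetric algebra $T(V)=\bigoplus_{n\geq 0}(V^{\otimes n})_{\Sigma_n}$ and check preservation directly: the tensor powers $V\mapsto V^{\otimes n}$ preserve filtered colimits because $\otimes$ commutes with filtered colimits in each variable and filtered colimits commute with one another; taking $\Sigma_n$-coinvariants is itself a colimit and so commutes with the filtered colimits; and the coproduct $\bigoplus_n$ does likewise. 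Over $\Q$ there are no characteristic obstructions to these manipulations, so one could alternatively cite the corresponding statement for free algebras over an operad in characteristic zero. Granting local presentability of $\Chz$ and the finitariness of $T$, the recognition principle yields that $\CDGA$ is locally presentable; together with cofibrant generation this establishes that $\CDGA$ is combinatorial.
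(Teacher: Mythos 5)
Your proposal is correct and follows essentially the same route as the paper: combinatoriality is reduced to cofibrant generation (already recorded in \cref{prop-modelos-cdga}) plus local presentability, and the latter is obtained from the recognition principle for algebras over a filtered-colimit-preserving monad on the locally presentable category $\Chz$, which is exactly the paper's appeal to \cite[2.78]{adamek-rosicky}. The only difference is that you verify the finitariness of $T=UF$ by hand via the symmetric algebra formula, where the paper simply cites \cite[Prop. 2.4.1]{Fresse-2009}; your computation is a correct unpacking of that citation.
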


Call a functor $\OG\to \CDGA$ a \emph{$G$-system of cdgas}.

 \begin{theorem} \label{cor-sciarappa-cdga}
There is a  Quillen adjunction \[\left(\APL\right)_*:\sSet^\OGop_\proj\rightleftarrows \left(\CDGA^\OG_ \inj\right)^\op:\langle - \rangle_*,\] 
where $\proj$ and $\inj$ denote the projective and injective model structures, respectively. It is a partial Quillen equivalence on the $\OG$-indexed presheaves of connected,
nilpotent rational %
simplicial sets of finite type and the $G$-systems of cohomologically connected %
cdgas weakly of finite type.
 \end{theorem}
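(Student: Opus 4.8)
The plan is to obtain this statement as the special case $I=\OGop$ of the preceding theorem, so that the entire argument reduces to unwinding the variance bookkeeping. First I would set $I=\OGop$ in that theorem. Its source functor category is then $\sSet^{\OGop}_\proj$, which is exactly the category of $\OG$-indexed presheaves appearing in the statement; this projective model structure exists because $\sSet$ is cofibrantly generated. On the target side, the preceding theorem produces
\[
\CDGA^{I^\op}_\inj=\CDGA^{(\OGop)^\op}_\inj=\CDGA^{\OG}_\inj,
\]
and a functor $\OG\to\CDGA$ is precisely what we have named a $G$-system of cdgas. The existence of this injective structure follows from the combinatoriality of $\CDGA$ (established just above) together with the isomorphism $\Fun(I,\C^\op)\cong\Fun(I^\op,\C)^\op$ used in the proof of the preceding theorem.

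With the two model structures in hand, I would invoke \cref{lem-adjunction-on-fun} applied to the Quillen adjunction $\APL:\sSet\rightleftarrows\CDGA^\op:\langle-\rangle$ of \cref{thm-fund} to produce the Quillen adjunction $\left(\APL\right)_*$, and then \cref{lem-sciarappa} to upgrade it to a partial Quillen equivalence on the subcategories $\C_0^{I}$ and $\D_0^{I}$. Here $\C_0$ is the class of connected nilpotent rational Kan complexes of finite type and $\D_0$ is the class of cohomologically connected cofibrant cdgas weakly of finite type from \cref{thm-fund}. Since \cref{lem-sciarappa} already guarantees that these subcategories are closed under weak equivalences, that $L_*$ and $R_*$ preserve them, and that the derived unit and counit are levelwise weak equivalences on them, all three conditions of \cref{def-partial} hold with no further verification.

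The only genuinely substantive point — and it is bookkeeping rather than mathematics — is matching the variance on each side. A covariant diagram $\OGop\to\sSet$ is by definition an $\OG$-indexed presheaf, so $\C_0^{\OGop}$ is exactly the subcategory of presheaves of connected nilpotent rational Kan complexes of finite type named in the statement. Dually, because both adjoints are contravariant, the target is $\Fun((\OGop)^\op,\CDGA)=\Fun(\OG,\CDGA)$, so $\D_0^{\OGop}$ translates into the $G$-systems of cohomologically connected cofibrant cdgas weakly of finite type. I expect no real obstacle here beyond the care needed to keep the opposite categories straight: once these identifications are recorded, the specialization is complete.
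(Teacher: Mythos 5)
Your proposal is correct and follows exactly the paper's route: the paper obtains \cref{cor-sciarappa-cdga} precisely by specializing the preceding theorem (itself an application of \cref{lem-adjunction-on-fun} and \cref{lem-sciarappa} to the Bousfield--Gugenheim adjunction of \cref{thm-fund}) to $I=\OGop$ and identifying $\CDGA^{(\OGop)^\op}$ with $\CDGA^{\OG}$. The variance bookkeeping you carry out is the same as in the text, so nothing further is needed.
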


\begin{proof}
Consider the Quillen adjunction $\APL:\sSet\rightleftarrows \CDGA^\op:\langle - \rangle$ of \cref{thm-fund}. We want to apply  \cref{lem-sciarappa} to it, for $I=\OGop$.  By \cref{rmk-inheritance}, since the subcategories of $\sSet$ and $\CDGA^\op$ under consideration are closed under weak equivalences, it only remains to verify that the projective model structures exist.

 The projective model structure on $\sSet^\OGop$ exists because $\sSet$ is cofibrantly generated.  We claim that the projective model structure on $(\CDGA^\op)^\OGop$ also exists. This is not as straightforward because the target model category has the opposite model structure to a cofibrantly generated one, so a priori it has no reason to be cofibrantly generated as well. On the other hand, we can use the following trick: for any category $\C$ and any small category $I$ there is an isomorphism of categories 
\[\Fun(I^\op, \C^\op) \cong \Fun(I, \C)^\op.\] %
By definition of the projective, injective and opposite model structures, it follows that the projective model structure on $\Fun(\OGop, \C^\op)$ exists if and only if the injective model structure on $\Fun(\OG, \C)$ exists, in which case $\Fun(\OGop, \C^\op)_\proj \cong \left(\Fun(\OG, \C)_\inj\right)^\op$.  In our situation, we have $\C=\CDGA$, which we just proved is combinatorial, so the injective model structure exists.

Applying \cref{lem-adjunction-on-fun} and the above observations, we get a Quillen adjunction followed by an isomorphism:
\[\sSet^\OGop_\proj\rightleftarrows (\CDGA^\op)^\OGop_\proj \cong  \left(\CDGA^\OG_\inj\right)^\op.\]
We can now apply \cref{lem-sciarappa} to \cref{thm-fund} as we wanted, and we get the result.
\qedhere

\end{proof}

This gives an algebraic model for the genuine equivariant homotopy theory of connected, nilpotent, finite-type rational Kan complexes.

\begin{remark}
    The proofs of both \cref{cor-sciarappa-cdga} and \cref{cor-cdgl} do not use any properties of $\OG$, so in their statements one could replace it by any small category. On the other hand, the presence of $\OG$ is what allows us to interpret $\sSet^\OGop_\proj$ as a model for the genuine equivariant homotopy theory of spaces, thanks to \cref{thm-elmendorf}.
\end{remark}

\subsection{The \texorpdfstring{$r(\inj)$}{r(inj)} model structure on \texorpdfstring{$\CDGA^\OG$}{Fun(OG,CDGA)}} 
We will now describe a different model structure on $\CDGA^\OG$. It will be needed for the connection to the equivariant minimal models of \cite{Triantafillou} as cofibrant replacements, after \cite{Scull-2008}.  More comments on the relationship of our work with Scull's will be made in \cref{sec-scull}. We will need to work out three different model structures: since the work is quite long, we relegate the details and proofs to \cref{sect-two-model} for ease of reading.

Denote by $\Vect$ the category of rational vector spaces. Consider $\Vect^\OG$: since $\Vect$ is an abelian category, then $\Vect^\OG$ is so, too. We can therefore consider $\Chz(\Vect^\OG)$, the category of non-negative cochain complexes in it. %

We will use the following injective model structure on $\Chz(\Vect^\OG)$. Its existence follows from  \cite[4.4]{Bousfield--CosimplicialResolutions}. We will review it in detail in \cref{sect-model-chz}, where we also prove it is cofibrantly generated by giving explicit generating (acyclic) cofibrations.  %
We use the following terminology: an object $A=\{A^n(H)\}$ in $\Chz(\Vect^\OG)$ has \emph{degrees} $n\geq 0$ and \emph{components} $H\in \OG$.

\begin{theorem} \label{thm-model-ChzVectOG} There is a model structure on $\Chz(\Vect^\OG)$ whose weak equivalences are the quasi-isomorphisms, the cofibrations are the component-wise degreewise injections in positive degrees, and the fibrations are the degreewise component-wise surjections with degreewise injective kernel.
\end{theorem}

\begin{remark}
 By ``injective kernel'' we mean injective in the sense of homological algebra: the kernel of a fibration $f:A\to B$ is a cochain complex $\ker(f)\in \Chz(\Vect^\OG)$ and we ask that %
$\ker(f)^n\in \Vect^\OG$ be injective for all $n\geq 0$. %
\end{remark}

\begin{remark} \label{rmk-inj-model-st} There is a straightforward isomorphism of categories \[\Chz(\Vect^\OG) \cong (\Chz)^\OG. \]
Under this isomorphism, the model structure from \cref{thm-model-ChzVectOG} coincides with the injective model structure on the functor category $(\Chz)^\OG$, i.e. the model structure where a natural transformation is a weak equivalence (resp. cofibration) if and only if it is so component-wise.
\end{remark}

We will now transfer the above model structure to $G$-systems of cdgas. Let $F:\Chz \rightleftarrows \CDGA:U$ be the free--forgetful adjunction. Let \[F_*:(\Chz)^\OG \rightleftarrows \CDGA^\OG:U_*\] be the induced adjunction by post-composition. Endow $(\Chz)^\OG$ with the injective model structure (see \cref{rmk-inj-model-st}). We call the right-transferred model structure on $\CDGA^\OG$ along $U_*$ the \emph{$r(\inj)$ model structure} on $\CDGA^\OG$. We will prove the following theorem in \cref{sect-rinj}, where we also prove this structure is cofibrantly generated and give concrete generating (acyclic) cofibrations.

\begin{theorem}
The $r(\inj)$ model structure on $\CDGA^\OG$ exists. Its weak equivalences are the component-wise weak equivalences and its fibrations are the degreewise component-wise surjections with degreewise injective kernel as an object of $\Vect^\OG$.
\end{theorem}

\begin{remark} \label{rmk-inj-r(inj)-coincide}
    When $G$ is the trivial group, both the injective and the $r(\inj)$ model structures on $\CDGA^\OG$ coincide with Bousfield--Gugenheim's model structure on $\CDGA$ reviewed in \cref{prop-modelos-cdga}.
\end{remark}

We now relate the $r(\inj)$ model structure on $\CDGA^\OG$ to the injective one:

\begin{proposition} \label{prop-comparison-inj-r(inj)} There is a Quillen equivalence
\[\begin{tikzcd}
	{\CDGA^\OG_{r(\inj)}} & {\CDGA^\OG_\inj.}
	\arrow[""{name=0, anchor=center, inner sep=0}, "\id", shift left=2, from=1-1, to=1-2]
	\arrow[""{name=1, anchor=center, inner sep=0}, "\id", shift left=2, from=1-2, to=1-1]
	\arrow["\dashv"{anchor=center, rotate=-90}, draw=none, from=0, to=1]
\end{tikzcd}\]
\begin{proof}
It suffices to prove that a fibration $f$ in $\CDGA^\OG_\inj$ is in particular a fibration in $\CDGA^\OG_{r(\inj)}$. Indeed, we will then have that $\id:\CDGA^\OG_\inj \to \CDGA^\OG_{r(\inj)}$ preserves (acyclic) fibrations, so the adjunction is a Quillen adjunction, and therefore a Quillen equivalence.

The forgetful functor $U:\CDGA\to \Chz$ is right Quillen, so
$U_*:\CDGA^\OG_\inj\to (\Chz)^\OG_\inj\cong \Chz(\Vect^\OG)$ is right Quillen by \cref{lem-adjunction-on-fun}. Hence $U_*f$ is a fibration in $\Chz(\Vect^\OG)$, whose fibrations are the epimorphisms with degreewise injective kernel (\cref{thm-model-ChzVectOG}).
\end{proof}
\end{proposition}

The following theorem is the $r(\inj)$ analogue to \cref{cor-sciarappa-cdga} which used the injective model structure. 

 \begin{theorem} \label{thm-spaces-r(inj)}
There is a  Quillen adjunction \[\left(\APL\right)_*:\sSet^\OGop_\proj\rightleftarrows \left(\CDGA^\OG_{r(\inj)}\right)^\op:\langle - \rangle_*.\] 
It is a partial Quillen equivalence on the $\OG$-indexed presheaves of connected,
nilpotent rational %
simplicial sets of finite type and the $G$-systems of cohomologically connected %
cdgas weakly of finite type.
\begin{proof}
We use the following simple observation: if $L:\C\rightleftarrows \D:R$ is a Quillen equivalence, then the opposite adjunction $R^\op:\D^\op \rightleftarrows \C^\op:L^\op$ is also a Quillen equivalence. We apply this to the Quillen equivalence in \cref{prop-comparison-inj-r(inj)}  and compose it with \cref{cor-sciarappa-cdga} by means of \cref{lem-composition-partial} to get the result:
\[\begin{tikzcd}
	{\sSet^{\OGop}_\proj} & {\left(\CDGA^\OG_\inj\right)^\op} & {\left(\CDGA^\OG_{r(\inj)}\right)^\op.}
	\arrow[""{name=0, anchor=center, inner sep=0}, "{(\APL)_*}", from=1-1, to=1-2]
	\arrow[""{name=1, anchor=center, inner sep=0}, "{\langle - \rangle_*}", shift left=3, from=1-2, to=1-1]
	\arrow[""{name=2, anchor=center, inner sep=0}, "\id", from=1-2, to=1-3]
	\arrow[""{name=3, anchor=center, inner sep=0}, "\id", shift left=3, from=1-3, to=1-2]
	\arrow["\dashv"{anchor=center, rotate=-90}, draw=none, from=0, to=1]
	\arrow["\dashv"{anchor=center, rotate=-90}, draw=none, from=2, to=3]
\end{tikzcd} \qedhere\]
\end{proof}
 \end{theorem}

\begin{remark} \label{rmk-pointed-variant}
Let us give a sketch of how one can get a pointed %
variant of the above theorem. Replace the Bousfield--Gugenheim partial Quillen equivalence with the analogous one $\sSet_* \rightleftarrows \CDGA_{/\Q}^\op$, where $\CDGA_{/\Q}$ denotes the category of augmented cdgas, as already worked out in \cite[Thm. 10.1]{Bousfield--Gugenheim}. 
We can consider the augmented (in the sense of \cref{prop-overunder}) injective model structure on $\CDGA^\OG_{/\Q}$, which can be described either as $(\CDGA^\OG_\inj)_{/\Q}$ or as $(\CDGA_{/\Q})^\OG)_\inj$, or the augmented $r(\inj)$ model structure, as we explain in \cref{rmk-model-cdga1-via-aug}. Then the augmented analog of \cref{prop-comparison-inj-r(inj)} %
holds \emph{mutatis mutandis}, and thus the pointed/augmented analog of \cref{thm-spaces-r(inj)} also holds.
\end{remark}

\section{Three model structures}  \label{sect-two-model}

We now turn to the proofs of the model-categorical statements we made in the previous section. The first part  concerns the cofibrant generation of the injective model structure on $\Chz(\Vect^\OG)$. The second part concerns the existence and cofibrant generation of the $r(\inj)$ model structure on $\CDGA^\OG$. The third part does the same but for connected cdgas: this has not appeared above but will be used in \cref{sec-other-models}.

\subsection{The injective model structure on \texorpdfstring{$\Chz(\Vect^\OG)$}{Ch(Fun(Vect,OG))}} \label{sect-model-chz}

We will now describe the injective model structure on $\Chz(\Vect^\OG)$. Its existence is well-known. Already Quillen \cite{Quillen-HomotopicalAlgebra} established the existence of the \emph{projective} model structure on $\Chaz(\A)$ where $\A$ is an abelian category with enough projectives, using the Dold--Kan correspondence with simplicial objects. Dually, Bousfield \cite[4.4]{Bousfield--CosimplicialResolutions} established the existence of the injective model structure on $\Chz(\A)$ where $\A$ is an abelian category with enough injectives, via the Dold--Kan correspondence with cosimplicial objects. This is the structure we will adopt. However, Bousfield did not address the question of whether this model structure is cofibrantly generated. We will now prove it is. We will concentrate on the case where $\A=\Vect^\OG$, since it is the case that interests us, but we will later indicate how the generating (acyclic) cofibrations that we consider generalize to other abelian categories.

In order to specify what our sets of generating (acyclic) cofibrations in $\Chz(\Vect^\OG)$ will be, we need some preliminary considerations.

\subsubsection{A Grothendieck abelian category}
Recall that if $i:\C_0\to \C$ is %
a functor and $\V$ is a %
cocomplete category, then the precomposition functor $i^*:\Fun(\C,\V)\to \Fun(\C_0, \V)$ has a left %
adjoint given by left %
Kan extension along $i$, and it has an explicit %
expression as a colimit \cite[Cor. 6.2.6]{Riehl-Context}.

Now, for an $H\in \OG$ we can consider $\Aut_{\OG}(H)\cong N_G(H)/H=:W(H)$, the Weyl group of $H$. %
This generates the one-object %
subcategory $BW(H)$ of $\OG$. Take $i$ to be its inclusion into $\OG$ and $\V$ to be $\Vect$: the functor $i^*$ becomes $\ev_H:\Vect^\OG \to \Vect^{BW(H)}\cong \QWH$, where $\QWH$ denotes the category of left modules over the group ring $\Q[W(H)]$. We let $P_H$ %
denote the left %
Kan extension functor along $i$, so we have %
an adjunction
\[\begin{tikzcd}
	\QWH && {\Vect^\OG.}
	\arrow[""{name=0, anchor=center, inner sep=0}, "{P_H}", shift left=2, from=1-1, to=1-3]
	\arrow[""{name=1, anchor=center, inner sep=0}, "{\ev_H}", shift left=2, from=1-3, to=1-1]
	\arrow["\dashv"{anchor=center, rotate=-90}, draw=none, from=0, to=1]
\end{tikzcd}\]
Explicitly, from the colimit formula for the Kan extension we get, for $V\in \QWH$ and $K\in \OG$:
\[P_H(V)(K)=\Q[\Hom_\OG(H,K)] \otimes_{\Q[W(H)]} V\in \Vect.\]

The objects $P_H(V)\in \Vect^\OG$ are projective%
, since $\ev_H$ is exact (it preserves epimorphisms) so its left adjoint $P_H$ preserves projectives. %
We use them to define a set of (projective) generators in this category. Recall the following simple categorical observation: if %
$\begin{tikzcd}
	{\A_i} & \C
	\arrow[""{name=0, anchor=center, inner sep=0}, "{L_i}", from=1-1, to=1-2]
	\arrow[""{name=1, anchor=center, inner sep=0}, "{R_i}", shift left=3, from=1-2, to=1-1]
	\arrow["\dashv"{anchor=center, rotate=-90}, draw=none, from=0, to=1]
\end{tikzcd}$
is a set of adjunctions, $Q_i$ is a generator of $\A_i$ and the $R_i$ are jointly faithful %
(meaning: two morphisms $f$ and $g$ in $\C$ are equal as soon as $R_i(f)=R_i(g)$ for all $i$), then the $L_iQ_i$ form a set of generators of $\C$.

Now take $P_H\dashv \ev_H$, $H\in \OG$ as our adjunctions: since the regular module $\Q[W(H)]$ is a generator of $\QWH$, we deduce that $\{P_H(\Q[W(H)])\}_{H\in \OG}$ is a set of generators in $\Vect^\OG$. We introduce the notation
\[\PH:=P_H(\Q[W(H)])\in \Vect^\OG.\]
More explicitly, $\PH(K)=\Q[\Hom_\OG(H,K)]$. %
We remark that the direct sum $\bigoplus_{H\in \OG} \PH$ is a generator in $\Vect^\OG$ (in any cocomplete abelian category, the direct sum of a set of generators is a generator).

Finally, note that $\Vect^\OG$ is a Grothendieck abelian category, i.e. it is cocomplete, it has a generator and filtered colimits are exact (since colimits are computed pointwise).\footnote{More generally, a functor category $\A^I$ is Grothendieck abelian if $\A$ is.} %

\subsubsection{Sphere and disk functors} \label{subsub-spheres} For $n\geq 0$ we have a functor $S^n:\Vect^\OG\to \Chz(\Vect^\OG)$ which concentrates in degree $n$. It is left adjoint to the cycles functor $Z^n:\Chz(\Vect^\OG)\to \Vect^\OG$. For $n\geq 1$ we have a functor $D^n:\Vect^\OG\to \Chz(\Vect^\OG)$ which places a copy of an object in $\Vect^\OG$ in degrees $n-1$ and $n$, with the identity between them being the unique non-zero differential. It is left adjoint to the evaluation functor $\ev_{n-1}:\Chz(\Vect^\OG)\to \Vect^\OG$. These functors are related by natural transformations $j: S^n\Rightarrow D^n$ (the inclusion) and $d: \ev_{n-1}\Rightarrow Z^n$ (the differential):
\[\begin{tikzcd}
	{\Vect^\OG} & { \Chz(\Vect^\OG)} && {\Vect^\OG} & { \Chz(\Vect^\OG)} && \begin{array}{c} j: S^n\Rightarrow D^n \\ d:\ev_{n-1}\Rightarrow Z^n \end{array}
	\arrow[""{name=0, anchor=center, inner sep=0}, "{S^n}", from=1-1, to=1-2]
	\arrow[""{name=1, anchor=center, inner sep=0}, "{Z^n}", shift left=3, from=1-2, to=1-1]
	\arrow[""{name=2, anchor=center, inner sep=0}, "{D^n}", from=1-4, to=1-5]
	\arrow[""{name=3, anchor=center, inner sep=0}, "{\ev_{n-1}}", shift left=3, from=1-5, to=1-4]
	\arrow["\dashv"{anchor=center, rotate=-90}, draw=none, from=0, to=1]
	\arrow["\dashv"{anchor=center, rotate=-90}, draw=none, from=2, to=3]
\end{tikzcd}\]
What we will say next can be checked by hand; more abstractly, it follows from %
these two natural transformations being \emph{conjugate}, in the sense of \cite[Sect. IV.7]{MacLane}. For any $\delta:U\to V$ in $\Vect^\OG$, we have that %
\begin{equation} \label{eq-spheredisk}%
\begin{tikzcd}
	{S^nU} & X \\
	{D^nV} & Y
	\arrow["\alpha", from=1-1, to=1-2]
	\arrow["{D^n(\delta) \circ j_U}"', from=1-1, to=2-1]
	\arrow["f", from=1-2, to=2-2]
	\arrow["\beta"', from=2-1, to=2-2]
\end{tikzcd} \quad \text{commutes} \quad \iff \quad %
\begin{tikzcd}
	U & {Z^nX} \\
	V & {Z^nY}
	\arrow["a", from=1-1, to=1-2]
	\arrow["\delta"', from=1-1, to=2-1]
	\arrow["Z^n(f)", from=1-2, to=2-2]
	\arrow["{d_Y \circ b}"', from=2-1, to=2-2]
\end{tikzcd} \quad \text{commutes,}\end{equation}
where $a$ is the transpose of $\alpha$ and $b$ is the transpose of $\beta$.
Moreover, lifts in the diagram on the left are in one-to-one correspondence with maps $h: V\to X^{n-1}$ such that the following two diagrams commute:
\begin{equation}\label{eq-spheredisk-lift}\begin{tikzcd}
	& {X^{n-1}} \\
	V & {Y^{n-1}}
	\arrow["{f^{n-1}}", from=1-2, to=2-2]
	\arrow["h", from=2-1, to=1-2]
	\arrow["b"', from=2-1, to=2-2]
\end{tikzcd}
\quad \quad
\begin{tikzcd}
	U & {Z^nX} \\
	V & {X^{n-1}}
	\arrow["a", from=1-1, to=1-2]
	\arrow["\delta"', from=1-1, to=2-1]
	\arrow["h"', from=2-1, to=2-2]
	\arrow["d"', from=2-2, to=1-2]
\end{tikzcd}\end{equation}

\subsubsection{The case of \texorpdfstring{$\id_{\PH}$}{id_PH}}\label{subsub-idPH} We will use the following particular case of (\ref{eq-spheredisk}) several times: when we take $\delta$ to be $\id:\PH\to \PH$, then commutativity of the following three diagrams is equivalent:
\[\begin{tikzcd}
	{S^n(\PH)} & X \\
	{D^n(\PH)} & Y
	\arrow["\alpha", from=1-1, to=1-2]
	\arrow[from=1-1, to=2-1]
	\arrow["f", from=1-2, to=2-2]
	\arrow["\beta"', from=2-1, to=2-2]
\end{tikzcd} \quad \iff \quad %
\begin{tikzcd}
	\PH & {Z^nX} \\
	\PH & {Z^nY}
	\arrow["a", from=1-1, to=1-2]
	\arrow["\id"', from=1-1, to=2-1]
	\arrow["{Z^n(f)}", from=1-2, to=2-2]
	\arrow["{d\circ b}"', from=2-1, to=2-2]
\end{tikzcd}
\quad \iff \quad
\begin{tikzcd}
	{\Q[W(H)]} & {Z^nX(H)} \\
	{\Q[W(H)]} & {Z^nY(H)}
	\arrow["a", from=1-1, to=1-2]
	\arrow["\id"', from=1-1, to=2-1]
	\arrow["{Z^nf(H)}", from=1-2, to=2-2]
	\arrow["{d\circ b}"', from=2-1, to=2-2]
\end{tikzcd}\]
The first equivalence is a particular case of the previous point. The last diagram takes place in $\QWH$, and its commutativity is equivalent to that of the middle diagram because of the adjunction $P_H \dashv \ev_H$ (by abuse of notation we have denoted the transposes of $a$ and $b$ by $a$ and $b$). In $\QWH$, a map from $\Q[W(H)]$ amounts to choosing an element, %
so commutativity of the last diagram is equivalent to having elements \[a\in Z^nX(H), \quad b\in Y^{n-1}(H) \quad \text{ such that } \quad fa=db, \] and a lift in the diagram on the left is equivalent to having an element \[h\in X^{n-1}(H) \quad \text{ such that } \quad fh=b  \quad \text{ and } \quad dh=a.\]

\subsubsection{Baer sets} \label{subsub-baer} In an abelian category $\A$, define a \emph{Baer set} to be a set of monomorphisms \[\{\delta_t:U_t\to V_t\}_{t\in T}\] that detect injectivity, in the sense that an object $A\in \A$ is injective if and only if every map $U_t\to A$ admits an extension to a map $V_t \to A$.

In a Grothendieck abelian category, the inclusions of subobjects of a generator $P$ into $P$ form a Baer set \cite[Lem. 1, Sect. 1.10]{Grothendieck-Tohoku}. Alternatively, in \cite[Lem. 6.4]{Scull-2008}, Scull provides a different Baer set for $\Vect^\OG$ assuming $G$ is finite. %

We let $\{\delta_t:U_t\to V_t\}_{t\in T}$ denote any Baer set in $\Vect^\OG$. %

\subsubsection{The sets \texorpdfstring{$I$}{I} and \texorpdfstring{$J$}{J}} \label{subsub-IJ} We can finally define the sets of maps in $\Chz(\Vect^\OG)$ which will be our generating (acyclic) cofibrations. Let
\[J=\left\{0\to D^n(\PH)\right\}_{\substack{n\geq 1 \\ H\in \OG}} \quad \cup \quad \left\{D^n(U_t)\to D^n(V_t)\right\}_{\substack{n\geq 1 \\ t\in T}}.\]
In other words, $J$ is the set of maps $\{D^n(f)\}_{n\geq 1}$ where $f$ is either $0 \to \PH$ for some $H\in \OG$ or $U_t\to V_t$ for some $t\in T$.

Let
\[I=\left\{S^n(\PH) \to D^n(\PH)\right\}_{\substack{n\geq 1 \\ H \in \OG}} \quad \cup \quad \left\{S^0(\PH) \to 0 \right\}_{H\in \OG} \quad \cup \quad \left\{S^n(U_t) \to D^n(V_t)\right\}_{\substack{n\geq 1 \\ t\in T}}.\]
We can also reinterpret this set. For an $f:U\to V$ in $\Vect^\OG$ and $n\geq 1$, let $\gamma^n(f):S^n(U)\to D^n(V)$ be $D^n(f) \circ j_U$, as in (\ref{eq-spheredisk}). Then $I$ is the set of maps $\{\gamma^n(f)\}_{n\geq 1}$ where $f$ is either $\id:\PH\to \PH$ or $\delta_t:U_t\to V_t$, plus the maps $S^0(\PH)\to 0$. We can understand the latter to be of the form $\gamma^0(\id_{\PH})$ if we define $D^{-1}$ to be the zero functor.

\subsubsection{The model structure}

We now establish the existence of the desired cofibrantly generated model structure on this category, where the weak equivalences are the quasi-isomorphisms, the fibrations are the epimorphisms with degreewise injective kernel, and the cofibrations are the monomorphisms in positive degrees. More explicitly:

\begin{theorem} \label{thm-IJ}
    There is an \emph{injective} model structure on $\Chz(\Vect^\OG)$ whose weak equivalences are the quasi-isomorphisms, the cofibrations are the injections component-wise in positive degrees, and the fibrations are the degreewise component-wise surjections with degreewise injective kernel. 

    It is cofibrantly generated, with generating cofibrations $I$ and generating acyclic cofibrations $J$ as defined above. Every object is cofibrant, and the fibrant objects are the complexes which are degreewise injective.
\end{theorem}

\begin{proof} %
The existence of the structure is furnished by \cite[4.4]{Bousfield--CosimplicialResolutions}. The only hypothesis that needs to be checked is that $\Vect^\OG$ has enough injectives. But this is true: either by direct construction \cite[Sect. 4]{Triantafillou}, or by abstract reasons. Indeed, $\Vect^\OG$ is Grothendieck abelian, so it has enough injectives \cite[Thm. 1.10.1]{Grothendieck-Tohoku}.

Since $\Chz(\Vect^\OG)$ is a Grothendieck abelian category, it is locally presentable \cite[Prop. 3.10]{Beke}, so every object is small with respect to all morphisms %
    and therefore $I$ and $J$ permit the small object argument.

Let $f:X\to Y$ in $\Chz(\Vect^\OG)$ and $K=\ker(f)\in \Vect^\OG$. We will now prove that $J^\boxslash$ is the class of  fibrations.

\begin{itemize} 
\item \underline{$f$ epimorphism equivalent to lifts against $\{0\to D^n(\PH)\}_{n\geq 1, H\in \OG}$}:

Let $n\geq 1$, $H\in \OG$ and $b\in Y^{n-1}(H)$. To find an $u\in X^{n-1}(H)$ lifting $b$ means to find a lift in the following leftmost commutative diagram in $\QWH$.  Using the $P_H\dashv \ev_H$ then the $D^n\dashv \ev_{n-1}$ adjunctions, this is equivalent to lifting the transpose $\beta$ along $f$ against a map $0\to D^n(\PH)$:

\[\begin{tikzcd}
	0 & {X^{n-1}(H)} \\
	{\Q[W(H)]} & {Y^{n-1}(H)}
	\arrow[from=1-1, to=1-2]
	\arrow[from=1-1, to=2-1]
	\arrow["f", from=1-2, to=2-2]
	\arrow["u", dotted, from=2-1, to=1-2]
	\arrow["b"', from=2-1, to=2-2]
\end{tikzcd}
\quad \iff \quad
\begin{tikzcd}
	{0=P_H(0)} & {X^{n-1}} \\
	\PH & {Y^{n-1}}
	\arrow[from=1-1, to=1-2]
	\arrow[from=1-1, to=2-1]
	\arrow[from=1-2, to=2-2]
	\arrow[dotted, from=2-1, to=1-2]
	\arrow[from=2-1, to=2-2]
\end{tikzcd}
\quad \iff \quad%
\begin{tikzcd}
	{0=D^n(0)} & X \\
	{D^n(\PH)} & Y
	\arrow[from=1-1, to=1-2]
	\arrow[from=1-1, to=2-1]
	\arrow["f", from=1-2, to=2-2]
	\arrow[dotted, from=2-1, to=1-2]
	\arrow["\beta"', from=2-1, to=2-2]
\end{tikzcd}\]

\item 
\underline{Lifts against $\{D^n(U_t)\to D^n(V_t)\}_{n\geq 1, t\in T}$ imply degreewise injectivity of $K$:} %

Assume we have lifts against $D^n(\delta_t):D^n(U_t)\to D^n(V_t)$ for all $n\geq 1$ and $t\in T$. Let $n\geq 0$. We want to prove that $K^n\in \Vect^\OG$ is injective. It suffices to solve the following  extension problem, for an arbitrary $\phi:U_t\to K^n$ and $t\in T$:
\[\begin{tikzcd}
	{U_t} & {V_t} \\
	{K^n}
	\arrow["{\delta_t}", from=1-1, to=1-2]
	\arrow["\phi"', from=1-1, to=2-1]
	\arrow[dotted, from=1-2, to=2-1]
\end{tikzcd}\]
We reformulate this lifting problem as finding a lift in the following diagram on the left, where $i:K^n\hookrightarrow X^n$ is the inclusion. Indeed, $fu=0$ means $u$ lands in $K^n$, and $i \phi=u\delta_t$ means $u$ is the desired extension. But finding such  a lift  is equivalent to finding one in the diagram on the right by the $D^{n+1}\dashv \ev_n$ adjunction, and this lift exists by hypothesis:
\[\begin{tikzcd}
	{U_t} & {X^n} \\
	{V_t} & {Y^n}
	\arrow["{i \circ \phi}", from=1-1, to=1-2]
	\arrow["{\delta_t}"', from=1-1, to=2-1]
	\arrow["f", from=1-2, to=2-2]
	\arrow["u", dotted, from=2-1, to=1-2]
	\arrow["0"', from=2-1, to=2-2]
\end{tikzcd}
\quad \iff \quad
\begin{tikzcd}
	{D^{n+1}(U_t)} & X \\
	{D^{n+1}(V_t)} & Y
	\arrow[from=1-1, to=1-2]
	\arrow["{D^{n+1}(\delta_t)}"', from=1-1, to=2-1]
	\arrow["f", from=1-2, to=2-2]
	\arrow[dotted, from=2-1, to=1-2]
	\arrow[from=2-1, to=2-2]
\end{tikzcd}\]

\item \underline{$f$ epimorphism with degreewise injective $K$ implies lifts against $\{D^n(U_t)\to D^n(V_t)\}_{n\geq 1, t\in T}$:} 

Assume $f$ is an epimorphism and $K$ is degreewise injective. We want to find a lift in the following diagram on the left, for all $n\geq 1$ and $t\in T$. By the $D^n\dashv \ev_{n-1}$ adjunction, this is equivalent to finding a lift in the diagram on the right:
\[\begin{tikzcd}
	{D^n(U_t)} & X \\
	{D^n(V_t)} & Y
	\arrow["\alpha", from=1-1, to=1-2]
	\arrow[from=1-1, to=2-1]
	\arrow["f", from=1-2, to=2-2]
	\arrow[dotted, from=2-1, to=1-2]
	\arrow["\beta"', from=2-1, to=2-2]
\end{tikzcd}
\quad \iff \quad
\begin{tikzcd}
	{U_t} & {X^{n-1}} \\
	{V_t} & {Y^{n-1}}
	\arrow["a", from=1-1, to=1-2]
	\arrow["{\delta_t}"', from=1-1, to=2-1]
	\arrow["f", from=1-2, to=2-2]
	\arrow["b"', from=2-1, to=2-2]
    \arrow[dotted, from=2-1, to=1-2]
\end{tikzcd}\]
Now, since $f$ is an epimorphism, there is a short exact sequence $0\to K^{n-1} \hookrightarrow X^{n-1} \stackrel{f}{\to}Y^{n-1}\to 0$ and it splits because $K^{n-1}$ is injective. Therefore, there exists a morphism $s: Y^{n-1}\to X^{n-1}$ such that $fs=\id$. Consider the map $a-sb\delta_t:U_t\to X^{n-1}$. We have
\[f(a-sb\delta_t)=fa -fsb\delta_t=fa-b\delta_t=0,\]
so $a-sb\delta_t$ lands in $K^{n-1}$ which is injective, so the following dotted extension exists:
\[\begin{tikzcd}
	{U_t} & {V_t} \\
	{K^{n-1}}
	\arrow["{\delta_t}", from=1-1, to=1-2]
	\arrow["{a-sb\delta_t}"', from=1-1, to=2-1]
	\arrow["p", dotted, from=1-2, to=2-1]
\end{tikzcd}\]
Now consider the map $sb+p:V_t\to X^{n-1}$. It is the desired lift. Indeed, $f(sb+p)=fsb+fp=b$, and \[(sb+p)\delta_t=sb\delta_t+p\delta_t=sb\delta_t+a-sb\delta_t=a.\]
\end{itemize}
\smallskip

This proves $J^\boxslash$ is the class of fibrations. Now we will prove that $I^\boxslash$ is the class of acyclic fibrations, i.e. epimorphisms which are quasi-isomorphisms, or, equivalently by the long exact sequence in cohomology, epimorphisms with degreewise injective \emph{acyclic} kernel. Call $\FW$ this class of morphisms. \smallskip
\begin{itemize}
\item \underline{$H^0(f)$ monomorphism equivalent to lifts against $\{S^0(\PH)\to 0\}_{H\in \OG}$: }

Commutativity of each of the following diagrams is equivalent to commutativity of the other, where the last one takes place in $\QWH$:
\[\begin{tikzcd}
	{S^0(\PH)} & X \\
	0 & Y
	\arrow[from=1-1, to=1-2]
	\arrow[from=1-1, to=2-1]
	\arrow["f", from=1-2, to=2-2]
	\arrow[dotted, from=2-1, to=1-2]
	\arrow[from=2-1, to=2-2]
\end{tikzcd}
\quad \iff \quad
\begin{tikzcd}
	\PH & {Z^0X} \\
	0 & {Z^0Y}
	\arrow[from=1-1, to=1-2]
	\arrow[from=1-1, to=2-1]
	\arrow["f", from=1-2, to=2-2]
	\arrow[dotted, from=2-1, to=1-2]
	\arrow[from=2-1, to=2-2]
\end{tikzcd}
\quad \iff \quad
\begin{tikzcd}
	{\Q[W(H)]} & {Z^0X(H)} \\
	0 & {Z^0Y(H)}
	\arrow["a", from=1-1, to=1-2]
	\arrow[from=1-1, to=2-1]
	\arrow["Z^0f(H)", from=1-2, to=2-2]
	\arrow[dotted, from=2-1, to=1-2]
	\arrow[from=2-1, to=2-2]
\end{tikzcd}
\]
Thus, existence of a lift in the leftmost diagram is equivalent to any element $a\in Z^0X(H)$ belonging to the kernel of $Z^0f(H)$. In other words, $Z^0K(H)=0$. But $Z^0K=H^0K=\ker(H^0f)$, so this means $H^0(f)(H)$ is injective.
\end{itemize}
\smallskip

Now, we prove that $\FW\subset I^\boxslash$. Lifts against $\{S^0(\PH)\to 0\}_{H\in \OG}$ are a particular case of what we just proved. We continue; suppose $f\in \FW$:
\smallskip

\begin{itemize}
\item \underline{Maps in $\FW$ admit lifts against $\{S^n(U_t)\to D^n(U_t)\}_{n\geq 1, t\in T}$:}

Let $n\geq 1$ and $t\in T$. By (\ref{subsub-spheres}), commutativity of both of these diagrams is equivalent,
\[\begin{tikzcd}
	{S^n(U_t)} & X \\
	{D^n(V_t)} & Y
	\arrow["\alpha", from=1-1, to=1-2]
	\arrow[from=1-1, to=2-1]
	\arrow["f", from=1-2, to=2-2]
	\arrow["\beta"', from=2-1, to=2-2]
\end{tikzcd} 
\quad \iff \quad%
\begin{tikzcd}
	{U_t} & {Z^nX} \\
	{V_t} & {Z^nY}
	\arrow["a", from=1-1, to=1-2]
	\arrow["{\delta_t}"', from=1-1, to=2-1]
	\arrow["f", from=1-2, to=2-2]
	\arrow["d \circ b"', from=2-1, to=2-2]
\end{tikzcd}\]
and a lift in the diagram on the left is equivalent to a morphism $h:V_t\to X^{n-1}$ such that the diagrams (\ref{eq-spheredisk-lift}) commute, i.e. $fh=b:V_t\to Y^{n-1}$ and $dh\delta_t=a$. We will now define such a morphism. Since $f^{n-1}$ is surjective with injective kernel, the short exact sequence \begin{equation} \label{eq-znkd} \tag{$\sharp$} 0\to K^{n-1}\hookrightarrow X^{n-1} \stackrel{f}{\to} Y^{n-1}\to 0\end{equation} splits: let $s:Y^{n-1}\to X^{n-1}$ be such that $fs=\id$. Consider the morphism $a-dsb\delta_t:U_t\to Z^nX$. Then
\[f(a-dsb\delta_t)=fa-fdsb\delta_t=db\delta_t-dfsb\delta_t=0,\]
so $a-dsb\delta_t$ lands in $\ker(Z^nf)=Z^nK$.

We will now use that $Z^nK$ is injective for all $n\geq 0$. We prove this by induction. First, $Z^0K=H^0K=0$ because $K$ is acyclic. Assume $Z^{n-1}K$ is injective for $n\geq 1$. Consider the sequence \begin{equation}\tag{$\ddagger$}\label{eq-znk2} 0\to Z^{n-1}K \hookrightarrow K^{n-1} \stackrel{d}{\to} Z^nK\to 0.\end{equation} It is exact on the right because $K$ is acyclic, so it is a short exact sequence beginning with an injective. Thus, it splits, so $Z^nK$ is a summand of $K^{n-1}$ which is injective, so $Z^nK$ is injective.

Therefore, there exists an extension $e$ making the following diagram commute:
\[\begin{tikzcd}
	{U_t} & {V_t} \\
	{Z^nK}
	\arrow["{\delta_t}", from=1-1, to=1-2]
	\arrow["{a-dsb\delta_t}"', from=1-1, to=2-1]
	\arrow["e", dotted, from=1-2, to=2-1]
\end{tikzcd}\]
Since (\ref{eq-znk2}) is exact, we can choose a $\sigma:Z^nK \to K^{n-1}$ such that $d\sigma=\id$. We define $h:V_t \to X^{n-1}$ as $h=\sigma e+sb$. Since $f\sigma e=0$, then $fh=fsb=b$, and
\[dh\delta_t=d\sigma e \delta_t + dsb\delta_t = (a-dsb \delta_t) +dsb\delta_t=a.\]
Thus, $h$ is the desired morphism. \smallskip

\item \underline{Maps in $\FW$ admit lifts against $\{S^n(\PH)\to D^n(\PH)\}_{n\geq 1, H\in \OG}$:}

Let $n\geq 1$ and $H\in \OG$. By (\ref{subsub-idPH}), a commutative diagram
\[\begin{tikzcd}
	{S^n(\PH)} & X \\
	{D^n(\PH)} & Y
	\arrow["\alpha", from=1-1, to=1-2]
	\arrow[from=1-1, to=2-1]
	\arrow["f", from=1-2, to=2-2]
	\arrow["\beta"', from=2-1, to=2-2]
\end{tikzcd}\]
is equivalent to elements $a\in Z^nX(H)$ and $b\in Y^{n-1}(H)$ such that $f(a)=db$. A lift in the diagram is equivalent to an element $h\in X^{n-1}(H)$ such that $f(h)=b$ and $dh=a$, which we now define.

Since $f$ is surjective, there exists an $x\in X^{n-1}(H)$ such that $f(x)=b$. Now, $a-dx\in Z^nK(H)=\ker(Z^nf(H))$:
\[f(a-dx)=fa-fdx=db-dfx=db-db=0.\]
But since $K$ is acyclic, then $H^nK=0$ so $a-dx=dx'$ for some $x'\in K^{n-1}(H)$. Define $h\in X^{n-1}(H)$ by $h=x'+x$. Then $f(h)=f(x)=b$ and $dh=a-dx+dx=a$, so $h$ is the desired element.
\end{itemize}
\smallskip
We now prove that $I^\boxslash \subset \FW$. Assume $f$ is in $I^\boxslash$.
\smallskip
\begin{itemize}
    \item \underline{$Z^nf:Z^nX\to Z^nY$ epimorphism for all $n\geq 0$}

Let $n\geq 0$, $H\in \OG$ and $b\in Z^nY(H)$, so $db=0$. By (\ref{subsub-idPH}), the choice of elements $0\in Z^{n+1}X(H)$ and $b$ with $f(0)=db$ determines a commutative diagram
\begin{equation} \tag{$\flat$}\label{eq-Z^nf-epi}\begin{tikzcd}
	{S^{n+1}\PH} & X \\
	{D^{n+1}\PH} & Y
	\arrow["0", from=1-1, to=1-2]
	\arrow[from=1-1, to=2-1]
	\arrow["f", from=1-2, to=2-2]
	\arrow["\beta"', from=2-1, to=2-2]
\end{tikzcd}\end{equation}
which admits a lift by hypothesis. This lift determines an element $h\in X^n(H)$ such that $fh=b$ and $dh=0$, so $h\in Z^nX(H)$ is the desired inverse image of $b$ under $Z^nf$.

\smallskip
\item \underline{$f$ is an epimorphism:}

Let $n\geq 0$, $H\in \OG$ and $b\in Y^n(H)$, so $db\in Z^{n+1}Y(H)$. Since $Z^{n+1}f:Z^{n+1}X(H)\to Z^{n+1}Y(H)$ is surjective by the previous point, there exists an $a\in Z^{n+1}X(H)$ such that $fa=db$. This determines a commutative diagram as (\ref{eq-Z^nf-epi}) with $\alpha$ instead of 0. By hypothesis, there exists a lift, which amounts to an element $h\in X^n(H)$ such that $fh=b$ (and $dh=a$), so $h$ is the desired inverse image of $b$ under $f$. 

\smallskip
\item \underline{$f$ is a quasi-isomorphism:}

Since $Z^nf$ is an epimorphism for all $n\geq 0$, then the same is true of $H^nf$. It remains to show $H^nf$ is a monomorphism for all $n\geq 0$. Let $H\in \OG$.

Let $n\geq 0$. We first prove $H^{n+1}f:H^{n+1}X(H)\to H^{n+1}Y(H)$ is injective. Let $[a]\in H^{n+1}X(H)$ such that $H^{n+1}f[a]=0$, i.e. $fa=db$ for some $b\in Y^n(H)$.  As before, this determines a commutative diagram as in (\ref{eq-Z^nf-epi}) with $\alpha$ instead of 0. By hypothesis, there exists a lift, which amounts to an element $h\in X^n(H)$ such that $dh=a$ (and $fh=b$), which means $[a]=0$.

We now show $H^0f(H)=Z^0f(H):Z^0X(H)\to Z^0Y(H)$ is injective.  Let $a\in Z^0X(H)$ such that $f(a)=0$. Thus, the diagram on the left commutes, and by the $S^0\dashv Z^0$ adjunction this is equivalent to commutativity of the second diagram: %
\[
\begin{tikzcd}
	{\Q[W(H)]} & {Z^0X(H)} \\
	0 & {Z^0Y(H)}
	\arrow["a", from=1-1, to=1-2]
	\arrow[from=1-1, to=2-1]
	\arrow["f", from=1-2, to=2-2]
	\arrow[dotted, from=2-1, to=1-2]
	\arrow[from=2-1, to=2-2]
\end{tikzcd}
\quad \iff \quad
\begin{tikzcd}
	{S^0(\PH)} & X \\
	0 & Y
	\arrow[from=1-1, to=1-2]
	\arrow[from=1-1, to=2-1]
	\arrow["f", from=1-2, to=2-2]
   	\arrow[dotted, from=2-1, to=1-2]
	\arrow[from=2-1, to=2-2]
\end{tikzcd}
\]
A lift exists by hypothesis: this means $a=0$, as we wanted.
\smallskip

\item \underline{$Z^nK$ is injective for all $n\geq 1$:}

Let $n\geq 1$, $t\in T$ and $\phi:U_t\to Z^nK\subset Z^nX$. We want to find an extension as in the following diagram:
\[\begin{tikzcd}
	{U_t} & {V_t} \\
	{Z^nK}
	\arrow["{\delta_t}", from=1-1, to=1-2]
	\arrow["\phi"', from=1-1, to=2-1]
	\arrow["e", dotted, from=1-2, to=2-1]
\end{tikzcd}\]
By (\ref{subsub-spheres}), commutativity of the following two diagrams is equivalent:
\[\begin{tikzcd}
	{S^n(U_t)} & X \\
	{D^n(V_t)} & Y
	\arrow[from=1-1, to=1-2]
	\arrow[from=1-1, to=2-1]
	\arrow["f", from=1-2, to=2-2]
	\arrow["0"', from=2-1, to=2-2]
\end{tikzcd}
\quad \iff \quad%
\begin{tikzcd}
	{U_t} & {Z^nX} \\
	{V_t} & {Z^nY}
	\arrow["{\phi}", from=1-1, to=1-2] %
	\arrow["{\delta_t}"', from=1-1, to=2-1]
	\arrow["f", from=1-2, to=2-2]
	\arrow["0"', from=2-1, to=2-2]
\end{tikzcd}\]
By hypothesis, there exists a lift in the diagram on the left, which amounts to a morphism $h:V_t\to X^{n-1}$ such that $fh=0$ and $dh\delta_t=\phi$. The first condition means $h$ lands in $K^{n-1}$, so we can define $e:V_t\to Z^nK$ to be $dh$, and it satisfies $e\delta_t=\phi$: it is our desired extension.

\smallskip
\item \underline{$K^n$ is injective for all $n\geq 0$:}

Let $n\geq 0$. We have already proven that $f$ is a quasi-isomorphism and an epimorphism. In particular, $K$ is acyclic, so we have a short exact sequence
\[0\to Z^nK \hookrightarrow K^n \stackrel{d}{\to} Z^{n+1}K \to 0.\]
If $n\geq 1$, then the sequence splits because $Z^nK$ is injective by the previous point. Thus, $K^n$ is isomorphic to $Z^nK \oplus Z^{n+1}K$, a direct sum of two injectives, so $K^n$ is injective. For $n=0$, we have $Z^0K=H^0K=0$: the short exact sequence gives $K^0\cong Z^1K$ which is injective. \qedhere
\end{itemize}
\end{proof}

\begin{remark}
    In the proof of \cref{thm-IJ}, we appeal to \cite{Bousfield--CosimplicialResolutions} in order to establish the existence of the model structure. Our work consists in checking it is cofibrantly generated by $I$ and $J$. In fact, with a bit more work one can bypass the use of \cite{Bousfield--CosimplicialResolutions} entirely. Indeed, from the characterizations of $I$ and $J$ it follows immediately that $I^\boxslash=W\cap J^\boxslash$, where $W$ is the class of weak equivalences.  
    Thus, we would need to check only one more hypothesis from Kan's criterion on the existence of cofibrantly generated model structures \cite[Thm 2.1.19]{Hovey-ModelCategories}%
    , namely, that $J$-cell complexes are weak equivalences. %
    Afterwards, we would need to check that ${}^\boxslash(I^\boxslash)$  is the class of monomorphisms in positive degrees.
\end{remark}

The proof of \cref{thm-IJ} uses elements in order to make it more readable, but they are not essential. An element-free proof would consistently exploit that $\{\PH\}_{H\in \OG}$ is a set of generators; observe we never used they are projective. Thus, the proof of \cref{thm-IJ} yields:

\begin{theorem} \label{thm-grothendieck}
Let $\A$ be a Grothendieck abelian category with generators $\{P_H\}_{H\in \O}$ indexed by a set $\O$. Let $\{U_t\to V_t\}_{t\in T}$ be a Baer set in $\A$. The \emph{injective} model structure on $\Chz(\A)$ exists: its weak equivalences are the quasi-isomorphisms, the fibrations are the epimorphisms with degreewise injective kernel, and the cofibrations are the monomorphisms in positive degrees. It is cofibrantly generated, with generating acyclic cofibrations given by %
\[J=\left\{0\to D^n(P_H)\right\}_{\substack{n\geq 1 \\ H\in \O}} \quad \cup \quad \left\{D^n(U_t)\to D^n(V_t)\right\}_{\substack{n\geq 1 \\ t\in T}}\]
        and generating cofibrations given by 
\[I=\left\{S^n(P_H) \to D^n(P_H)\right\}_{\substack{n\geq 1 \\ H \in \O}} \quad \cup \quad \left\{S^0(P_H) \to 0 \right\}_{H\in \O} \quad \cup \quad \left\{S^n(U_t) \to D^n(V_t)\right\}_{\substack{n\geq 1 \\ t\in T}}.\]
Every object is cofibrant, and the fibrant objects are the complexes which are degreewise injective.
\end{theorem}

\begin{remark}
As we mentioned in (\ref{subsub-baer}), in a Grothendieck abelian category the inclusions of subobjects of a generator $P$ into $P$ form a Baer set. Using this Baer set, the sets above simplify as follows: \[J=\left\{D^n(U) \to D^n(P)\right\}_{\substack{n\geq 1 \\ U\in \Sub(P)}}\] %
and
\[I=\left\{S^n(U) \to D^n(U)\right\}_{\substack{n\geq 1 \\ U\in \Sub(P)}}\quad \cup \quad \{S^0(P)\to 0\}.\]
This follows simply from the decomposition $\Sub(P)=\{0\} \cup (\Sub(P)\setminus \{0\})$ for $J$, and from the decomposition $\Sub(P)=\{P\}\cup (\Sub(P)\setminus \{P\})$ for $I$.
\end{remark}

\subsection{The \texorpdfstring{$r(\inj)$}{r(inj)} model structure on \texorpdfstring{$\CDGA^\OG$}{Fun(OG,CDGA)}} \label{sect-rinj}

Let $F:\Chz \rightleftarrows \CDGA:U$ be the free--forgetful adjunction. Let \[F_*:(\Chz)^\OG \rightleftarrows \CDGA^\OG:U_*\] be the induced adjunction by post-composition. Consider the injective model structure on $(\Chz)^\OG \cong \Chz(\Vect^\OG)$ from \cref{thm-IJ} (see also \cref{rmk-inj-model-st}). We call the right-transferred model structure on $\CDGA^\OG$ along $U_*$ the \emph{$r(\inj)$ model structure} on $\CDGA^\OG$.

We will now prove this structure exists and is cofibrantly generated. The generating cofibrations (resp. generating acyclic cofibrations) will be given by $F_*I$ (resp. $F_*J$), with $I$ and $J$ as in (\ref{subsub-IJ}). Let us call these sets $\I$ and $\J$. We introduce some notation. Let $\D^n_H:=F_*D^n(\PH)$, $\D^nW:=F_*D^n(W)$, for $n\geq 1$, $H\in \OG$, $W\in \Vect^\OG$, and we similarly define $\S^n$ for $n\geq 0$ using $S^n$ instead of $D^n$. Thus,
\[\J=\left\{\Q\to \D^n_H\right\}_{\substack{n\geq 1 \\ H\in \OG}} \quad \cup \quad \left\{\D^nU_t\to \D^nV_t\right\}_{\substack{n\geq 1 \\ t\in T}},\]

\[\I=\left\{\S^n_H \to \D^n_H\right\}_{\substack{n\geq 1 \\ H \in \OG}} \quad \cup \quad \left\{\S^0_H \to \Q \right\}_{H\in \OG} \quad \cup \quad \left\{\S^nU_t \to \D^nV_t\right\}_{\substack{n\geq 1 \\ t\in T}}.\]
Here, $\Q\in \CDGA^\OG$ is component-wise the unit of $\CDGA$.

\begin{theorem} \label{thm-IJ-cdga}
    The $r(\inj)$ model structure on $\CDGA^\OG$ exists. Its weak equivalences are the component-wise quasi-isomorphisms and its fibrations are the degreewise component-wise surjections with degreewise injective kernel as an object of $\Vect^\OG$. It is cofibrantly generated by the sets $\I$ and $\J$ defined above. The fibrant objects are the systems of cdgas which are degreewise injective as objects of $\Vect^\OG$.
    \begin{proof}
We will use Kan's standard criterion from \cite[Thm. 11.3.2]{Hirschhorn} on right-transferred model structures. Since $\CDGA^\OG$ is locally presentable (by \cref{lem-cdga-combinatorial} and \cite[1.54]{adamek-rosicky}), we need only check that %
the relative $\J$-cell complexes are quasi-isomorphisms. In fact, we prove they are acyclic cofibrations in $\Chz(\Vect^\OG)$. We do this in steps. We use that pushouts in $\CDGA^\OG$ are computed component-wise. Moreover, pushouts in $\CDGA$ are relative tensor products. %
\begin{itemize}
\item \underline{$\Q\to\D^nW$ are acyclic cofibrations in $\Chz$,  for $W\in \Vect$:}

Here, by $\D^n$ ($n\geq 1$) we mean the composition $\Vect \stackrel{D^n}{\to} \Chz \stackrel{F}{\to} \CDGA$. This is a slight abuse of notation: these are the non-equivariant versions of $D^n$ and $\D^n$ defined before.  The map $\Q\to \D^nW$ in question is the unique map in $\CDGA$, namely the unit. We now consider it as a map in $\Chz$. It is the inclusion of 0 in positive degrees, so it is a cofibration. %
Let us prove it is a quasi-isomorphism.

Taking a basis for $W$, we may write $W\cong \Q^{\oplus I}$ for some set $I$. Since $\D^n$ preserves colimits, we have $\D^nW\cong \D^n(\Q)^{\otimes I}$. (If $I$ is infinite, this means the filtered colimit of the tensor product of $\D^n\Q$ over the finite subsets of $I$.) %
By the Künneth theorem, %
(and, if $I$ is infinite, because cohomology commutes with filtered colimits), it suffices to prove $\Q \to \D^n\Q$ is a quasi-isomorphism. This is standard, see e.g. \cite[Proof of Lem. 6.2.6]{Fresse-Teichmuller2}.

\smallskip
\item \underline{Pushouts of maps $\Q\to \D^n_H$ are acyclic cofibrations in $\Chz(\Vect^\OG)$:}

Let $A\in \CDGA^\OG$, $H\in \OG$ and $n\geq 1$. There is a unique map $\Q\to A$ in $\CDGA^\OG$, namely component-wise the unit. We take its pushout along $\Q\to \D^n_H$ and evaluate on an $H'\in \OG$, to get the following pushout diagram in $\CDGA$:
\[\begin{tikzcd}
	\Q & {A(H')} \\
	{\D^n_H(H')} & {A(H')\otimes\D^n_H(H')}
	\arrow[from=1-1, to=1-2]
	\arrow["\lambda"', from=1-1, to=2-1]
	\arrow[from=1-2, to=2-2]
	\arrow[from=2-1, to=2-2]
	\arrow["\lrcorner"{anchor=center, pos=0.125, rotate=180}, draw=none, from=2-2, to=1-1]
\end{tikzcd}\]
Consider it as a diagram in $\Chz$. %
The left vertical map, call it $\lambda$, is an acyclic cofibration by the previous point. The map on the right is $A(H')\cong A(H')\otimes \Q \stackrel{\id \otimes \lambda}{\to} A(H') \otimes \D^n_H(H')$, which is injective %
and a quasi-isomorphism by the Künneth theorem.

\smallskip
\item \underline{Pushouts of maps $\D^nU_t\to \D^nV_t$ are acyclic cofibrations in $\Chz(\Vect^\OG)$:}

Let $H\in \OG$, $n\geq 1$, $t\in T$ and $\phi:\D^nU_t\to A$ be a map in $\CDGA^\OG$. Take the cokernel $C$ of the subspace inclusion $U_t(H)\hookrightarrow V_t(H)$. %
Thus, we have an isomorphism $V_t(H)\cong U_t(H) \oplus C$ in the undercategory $\Vect_{U_t(H)/}$. Apply the left adjoint $\D^n:\Vect \to \CDGA$ to obtain an isomorphism $\D^nV_t(H) \cong \D^nU_t(H) \otimes \D^n C$ in $\CDGA_{\D^nU_t(H)/}$. %
Thus, we can identify the pushout of $\phi(H)$:
\[\begin{tikzcd}
	{\D^nU_t(H)} & {A(H)} & {A(H)\otimes \Q} \\
	{\D^nV_t(H)} & {A(H)\otimes_{\D^nU_t(H)}(\D^nU_t(H) \otimes \D^nC)} & {A(H)\otimes \D^n C}
	\arrow[""{name=0, anchor=center, inner sep=0}, "{\phi(H)}", from=1-1, to=1-2]
	\arrow[from=1-1, to=2-1]
	\arrow["\cong", from=1-2, to=1-3]
	\arrow[from=1-2, to=2-2]
	\arrow["{\id \otimes \lambda}", from=1-3, to=2-3]
	\arrow[from=2-1, to=2-2]
	\arrow["\cong"', from=2-2, to=2-3]
	\arrow["\lrcorner"{anchor=center, pos=0.125, rotate=180}, draw=none, from=2-2, to=0]
\end{tikzcd}\]
As before, the unit $\lambda:\Q\to \D^nC$ is an acyclic cofibration in $\Chz(\Vect^\OG)$ by the first point, so $\id\otimes \lambda$ is an injection as well and a quasi-isomorphism by the Künneth theorem.

\smallskip
\item \underline{Transfinite compositions of pushouts of maps in $\J$ are quasi-isomorphisms:}

We just proved that pushouts of maps in $\J$ are acyclic cofibrations in $\Chz(\Vect^\OG)$. But transfinite compositions of acyclic cofibrations in a model category are acyclic cofibrations, so in particular, weak equivalences. Since the functor $U_*:\CDGA^\OG\to (\Chz)^\OG$ preserves sequential %
colimits, this finishes the proof. %
\qedhere
        \end{itemize}
    \end{proof}
\end{theorem}

\subsection{The \texorpdfstring{$r(\inj)$ model structure on $(\CDGA^1)^\OG$}{r(inj) model structure on Fun(OG,CDGA1)}} \label{sect-rinj-connected}

Let $\CDGA^1$ denote the full subcategory of $\CDGA$ on the connected cdgas (see \cref{def-cdga-conventions}). Our aim in this section is to get a model structure on $(\CDGA^1)^\OG$ analogous to the $r(\inj)$ model structure on $\CDGA^\OG$.

A connected cdga $A$ is augmented in a unique way, and its differential $d^0$ is zero. The augmentation ideal of $A$ (i.e. the kernel of the augmentation), which is simply its positive part, is a non-unital cdga concentrated in positive degrees, and this defines an equivalence of categories: the essential inverse is $\Q\oplus -$. We can reinterpret this category of non-unital cdgas as follows:

Let $\Chun$ denote the category of positively graded cochain complexes of rational vector spaces. It is isomorphic to $\Chz$ via a shift.  Consider the functor $\Chun\to \Chun$, $V\mapsto\bigoplus_{k\geq 1}(V^{\otimes k})_{\Sigma_k}$. It defines a monad on $\Chun$ such that the category of algebras over it %
is equivalent to the category of non-unital cdgas concentrated in positive degrees, and therefore to $\CDGA^1$. Henceforth, we will make this identification without further notice. Thus, we have a free--forgetful adjunction
\[\begin{tikzcd}
	\Chun & {\CDGA^1.}
	\arrow[""{name=0, anchor=center, inner sep=0}, "F", from=1-1, to=1-2]
	\arrow[""{name=1, anchor=center, inner sep=0}, "(-)^{\geq1}", shift left=3, from=1-2, to=1-1]
	\arrow["\dashv"{anchor=center, rotate=-90}, draw=none, from=0, to=1]
\end{tikzcd}\]
Here $F$ is in fact the restriction of the free functor $\Chz\to \CDGA$ to its subcategory $\Chun$, and $(-)^{\geq 1}$ is the %
positive part of the connected cdga. 

This induces an adjunction %
\[\begin{tikzcd}
	{(\Chun)^\OG} & {(\CDGA^1)^\OG.}
	\arrow[""{name=0, anchor=center, inner sep=0}, "{F_*}", from=1-1, to=1-2]
	\arrow[""{name=1, anchor=center, inner sep=0}, "{(-)^{\geq1}_*}", shift left=3, from=1-2, to=1-1]
	\arrow["\dashv"{anchor=center, rotate=-90}, draw=none, from=0, to=1]
\end{tikzcd}\] Using the isomorphisms $(\Chun)^\OG \cong \Chun(\Vect^\OG) \cong \Chz(\Vect^\OG)$, we endow $(\Chun)^\OG$ with the model structure from \cref{thm-IJ}. 
Our aim is to right-transfer it to $(\CDGA^1)^\OG$, just as in \cref{sect-rinj}.

For clarity, let us first make the model structure on $(\Chun)^\OG$ explicit. The weak equivalences are the quasi-isomorphisms, the cofibrations are the injections component-wise in degrees $\geq 2$, and the fibrations are the degreewise component-wise surjections with degreewise injective kernel. %
It is cofibrantly generated by the following sets $J^{\geq1}$ and $I^{\geq1}$:

\[J^{\geq1}=\left\{0\to D^n(\PH)\right\}_{\substack{n\geq 2 \\ H\in \OG}} \quad \cup \quad \left\{D^n(U_t)\to D^n(V_t)\right\}_{\substack{n\geq 2 \\ t\in T}}.\]
\[I^{\geq1}=\left\{S^n(\PH) \to D^n(\PH)\right\}_{\substack{n\geq 2 \\ H \in \OG}} \quad \cup \quad \left\{S^1(\PH) \to 0 \right\}_{H\in \OG} \quad \cup \quad \left\{S^n(U_t) \to D^n(V_t)\right\}_{\substack{n\geq 2 \\ t\in T}}.\]

We can now establish the desired model structure. We use the notations from \cref{sect-rinj} for the generating (acyclic) cofibrations.

\begin{theorem} \label{thm-rinj-connected}
The right-transferred model structure on $(\CDGA^1)^\OG$ along $(-)^{\geq1}_*$ exists: we call it the \emph{$r(\inj)$ model structure}. Its weak equivalences are the component-wise quasi-isomorphisms and its fibrations are the degreewise component-wise surjections with degreewise injective kernel as an object of $\Vect^\OG$. It is cofibrantly generated, with set of generating cofibrations $\I^1$ and set of generating acyclic cofibrations $\J^1$ given as follows:
\[\J^1=\left\{\Q\to \D^n_H\right\}_{\substack{n\geq 2 \\ H\in \OG}} \quad \cup \quad \left\{\D^nU_t\to \D^nV_t\right\}_{\substack{n\geq 2 \\ t\in T}},\]

\[\I^1=\left\{\S^n_H \to \D^n_H\right\}_{\substack{n\geq 2 \\ H \in \OG}} \quad \cup \quad \left\{\S^1_H \to \Q \right\}_{H\in \OG} \quad \cup \quad \left\{\S^nU_t \to \D^nV_t\right\}_{\substack{n\geq 2 \\ t\in T}}.\] %

The fibrant objects are the systems of connected cdgas which are degreewise injective as objects of $\Vect^\OG$.
\begin{proof} We will again apply Kan's criterion from \cite[Thm. 11.3.2]{Hirschhorn} on right-transferred model structures.

The category $\CDGA^1$ is locally presentable: the proof is the same as in \cref{lem-cdga-combinatorial}. %

As in any category of algebras over a monad, limits are created by the forgetful functor $(-)^{\geq1}_*$. (This means in particular that products in $\CDGA^1$ are the fiber products over the augmentations: they do not coincide with the products in $\CDGA$.) 
On the other hand, coproducts and coequalizers of connected cdgas stay connected, so $\CDGA^1$ is closed under colimits in $\CDGA$. Thus, $\CDGA^1$ is complete and cocomplete, so $(\CDGA^1)^\OG$ is also complete and cocomplete.

Applying $F_*$ to the sets $J^{\geq1}$ and $I^{\geq1}$ gives exactly the sets $\J^1$ and $\I^1$. Thus, we only need to check that relative $\J^1$-cell complexes are quasi-isomorphisms. But $\J^1\subset \J$ and colimits in $(\CDGA^1)^\OG$ are computed in $\CDGA^\OG$, so this follows from relative $\J$-cell complexes being quasi-isomorphisms. Therefore, Kan's criterion applies.

Finally, the classes of weak equivalences and fibrations are exactly as described. For fibrations, this is obvious: a map $f:X\to Y$ in $(\CDGA^1)^\OG$ is a fibration if and only if $f^{\geq1}:X^{\geq1}\to Y^{\geq1}$ is a fibration in $(\Chun)^\OG$. This means that $f^{\geq1}$ is degreewise component-wise surjective with degreewise injective kernel. But those same conditions in degree zero are automatic, since the cdgas are connected. 
For weak equivalences, it is similar: a map $f$ in $(\CDGA^1)^\OG$ is a weak equivalence if and only if $f^{\geq1}$ is a quasi-isomorphism, which is equivalent to $f$ being a quasi-isomorphism since the cdgas are connected. %
\end{proof}
\end{theorem}

\begin{remark}  \label{rmk-model-cdga1} When taking $G=\{e\}$, the $r(\inj)$ model structure on $(\CDGA^1)^\OG$ amounts to a model structure on $\CDGA^1$ whose weak equivalences are the quasi-isomorphisms, the fibrations are the degreewise surjections, and the generating (acyclic) cofibrations are obtained by applying the functor $F$ to the generating (acyclic) cofibrations of $\Chun$. 
\end{remark}

\begin{remark} \label{rmk-model-cdga1-via-aug} We indicate another description of the model structure in $(\CDGA^1)^\OG$. First, observe that $\CDGA^1$ is a reflective subcategory of the category of augmented cdgas $\CDGA_{/\Q}$. Indeed, it is a full subcategory since morphisms of connected cdgas automatically commute with the unique augmentations. The reflector $L:\CDGA_{/\Q}\to \CDGA^1$ is given by taking the quotient by the dg ideal generated by the kernel of the augmentation in degree 0. Thus, $L$ does not alter the cdga in degrees $\geq 2$.

Endow the overcategory $(\CDGA_{r(\inj)}^\OG)_{/\Q}$ with the %
model structure from \cref{prop-overunder}. 
It is cofibrantly generated, with generating (acyclic) cofibrations given by the generating (acyclic) cofibrations of $\CDGA^\OG_{r(\inj)}$, where we endow their (co)domains with all possible augmentations turning them into morphisms that respect the augmentations.

There is an isomorphism of categories $(\CDGA^\OG)_{/\Q}\cong (\CDGA_{/\Q})^\OG$, %
so we can drop the parentheses. Thus, we have an induced adjunction by post-composition, where $i:\CDGA^1\hookrightarrow \CDGA_{/\Q}$ denotes the inclusion:
\[\begin{tikzcd}
	{\CDGA_{/\Q}^\OG} & {(\CDGA^1)^\OG}
	\arrow[""{name=0, anchor=center, inner sep=0}, "{L_*}", from=1-1, to=1-2]
	\arrow[""{name=1, anchor=center, inner sep=0}, "{i_*}", shift left=3, from=1-2, to=1-1]
	\arrow["\dashv"{anchor=center, rotate=-90}, draw=none, from=0, to=1]
\end{tikzcd}\]
    We now observe that the $r(\inj)$ model structure on $(\CDGA^1)^\OG$ from \cref{thm-rinj-connected} is also the right-transferred model structure along this $i_*$. Indeed, in this model structure a morphism $f$ in $(\CDGA^1)^\OG$ is a fibration or a weak equivalence if and only if it is so in $\CDGA^\OG_{r(\inj)}$, which is precisely the characterization of the fibrations and weak equivalences in \cref{thm-rinj-connected}.

    In fact, one can alternatively establish the existence of this model structure using Kan's transfer theorem for the functor $i_*$. That is a similarly uncomplicated proof, but there is a disadvantage to this approach: it produces sets of generating (acyclic) cofibrations which are larger, due to the fact of the generating (acyclic) cofibrations in the augmented category having to take into account all the possible augmentations.
\end{remark}

\begin{remark} Even when restricted to reduced simplicial sets, the functor $\APL$ from \cref{thm-fund} need not produce connected cdgas. We can still make a connection between $(\CDGA^1)^\OG$ and $\sSet_*^\OGop$, albeit in the form of a zig-zag. It is not hard to prove that the Quillen adjunction $L_*\dashv i_*$ from the previous remark is a partial Quillen equivalence on the systems of cohomologically connected augmented cdgas weakly of finite type and their connected counterparts. We get the following zig-zag of partial Quillen equivalences, where the first one was explained in \cref{rmk-pointed-variant}:
\[\begin{tikzcd}
	{\sSet_*^\OGop} & {(\CDGA_{/\Q}^\OG)_{r(\inj)}^\op} & {((\CDGA^1)^\OG)_{r(\inj)}^\op.}
	\arrow[""{name=0, anchor=center, inner sep=0}, "{(\APL)_*}", from=1-1, to=1-2]
	\arrow[""{name=1, anchor=center, inner sep=0}, "{\langle - \rangle_*}", shift left=3, from=1-2, to=1-1]
	\arrow[""{name=2, anchor=center, inner sep=0}, "{L_*^\op}"', shift right=3, from=1-2, to=1-3]
	\arrow[""{name=3, anchor=center, inner sep=0}, "{i_*^\op}"', from=1-3, to=1-2]
	\arrow["\dashv"{anchor=center, rotate=-90}, draw=none, from=0, to=1]
	\arrow["\dashv"{anchor=center, rotate=-90}, draw=none, from=3, to=2]
\end{tikzcd}\]
\end{remark}

\section{Remarks on other models in the literature} \label{sec-other-models}

In this final section, we make some comments on other models for equivariant rational homotopy theory.

\subsection{Relation to Scull's work} \label{sec-scull}

We now explain the relation between our work and \cite{Scull-2008}, pointing out some inaccuracies in that article that we resolve.

\begin{enumerate}
    \item In \cite[Thm 3.1]{Scull-2008}, Scull defines a model structure on $\Chz(\Vect^\OG)$ that would coincide with the one in our \cref{thm-model-ChzVectOG}, since it would have the same weak equivalences and cofibrations. However, her fibrations  have injective kernel only in positive degrees. %
This is not correct, and we can give a counterexample. Let $A\in \Vect^\OG$ be a non-injective object, witnessed by a monomorphism $i:V\to W$ and a map $\phi:V\to A$ with no extension to $W$.

(For a concrete example, take $G=\Z/2\Z$. Then $\Vect^\OG$ is the category of representations of the quiver
\[\begin{tikzcd}
	{V_1} & {V_2}
	\arrow["a", from=1-1, to=1-1, loop, in=150, out=210, distance=5mm]
	\arrow["b", from=1-1, to=1-2]
\end{tikzcd}\]
subject to the relations $a^2=\id$, $ba=b$. 
Take a representation $V$ defined by $V_1=0$, $V_2=\Q$, $a=b=0$ and a representation $V'$ defined by $V_1'=V_2'=\Q$, $a'=b'=\id_\Q$. Let $i:V\to V'$ be the inclusion. Then $\id_V:V\to V$ does not extend to $V'$ along $i$: if there was an extension $h=(h_1,h_2):V'\to V$, then a component-wise examination shows $h_1=0$ and $h_2=\id_\Q$. On the other hand, since $h$ is a morphism of representations it needs to satisfy $b\circ h_1=h_2\circ b'$. This means $h_2=0$, a contradiction. Therefore, $V$ is not injective.) %

In other words, there is no lift in the following diagram in $\Vect^\OG$:
\[\begin{tikzcd}
	V & A \\
	W & 0.
	\arrow["\phi", from=1-1, to=1-2]
	\arrow["i"', from=1-1, to=2-1]
	\arrow[from=1-2, to=2-2]
	\arrow[from=2-1, to=2-2]
\end{tikzcd}
\]
Since $A=\ev_0(S^0(A))$, a lift in that diagram is equivalent to a lift in the following transpose diagram in $\Chz(\Vect^\OG)$ under the $D^1\dashv \ev_0$ adjunction (see (\ref{subsub-spheres}) for details):
\[\begin{tikzcd}
	{D^1(V)} & {S^0(A)} \\
	{D^1(W)} & 0.
	\arrow[from=1-1, to=1-2]
	\arrow["{D^1(i)}"', from=1-1, to=2-1]
	\arrow[from=1-2, to=2-2]
	\arrow[from=2-1, to=2-2]
\end{tikzcd}\]
Now, the arrow on the left is a monomorphism and a quasi-isomorphism (since disks are acyclic), therefore an acyclic cofibration, and the arrow on the right is an epimorphism whose kernel in positive degrees is zero, therefore injective. So if Scull's description of the fibrations were correct, there should exist a lift in the second (hence first) diagram above, but no lift exists. The solution to this problem is that fibrations have injective kernels in all degrees, as we have in \cref{thm-model-ChzVectOG}.
\item Before Theorem 3.2 in \cite{Scull-2008}, the author makes the convention of having all cdgas $A$ be ``based'', meaning $A^0=\Q$ (in \cref{def-cdga-conventions}, we have called ``connectedness'' the condition of the unit inducing an isomorphism in degree 0). In that theorem, the author considers the category we have denoted $(\CDGA^1)^\OG$ and claims to put a model structure on it, with weak equivalences the component-wise quasi-isomorphisms and fibrations the maps which are component-wise fibrations in $\Chz(\Vect^\OG)$. The proof given in \cite{Scull-2008} is incomplete. Indeed, at the very end of the proof of Ax5 of \cite[Thm. 3.2]{Scull-2008}, it is claimed that ``all maps in $\J$ are weak equivalences, so $i'(f)$ is a weak equivalence'', where $i'(f)$ is a ``transfinite pushout of maps in $\J$''. Here $\J$ is a certain set of maps in the category. But it does not follow that relative cell complexes of maps in $\J$ are weak equivalences from all maps in $\J$ being weak equivalences, so the proof is incomplete. We construct this model structure in \cref{thm-rinj-connected} by different means. In the course of the proof, we do prove that relative cell complexes of the putative generating acyclic cofibrations are quasi-isomorphisms. 
\item In \cite[Def. 3.4, 3.5]{Scull-2008}, the author defines sets of maps $\I$ and $\J$ which are to be the generating (acyclic) cofibrations of $(\CDGA^1)^\OG$ \cite[Prop. 3.6, 3.7]{Scull-2008}.
In our terminology from \cref{sect-rinj-connected}, her $\J$ is as our $\J^1$ (we now fix the Baer set to be the one from \cite[Lem. 6.4]{Scull-2008}: in particular, $G$ is now finite) but with the indexing sets extended to $n\geq 1$. This cannot be correct, since $\D^1$ on a system of non-zero vector spaces gives a non-connected system of cdgas, so those $\I$ and $\J$ are not sets of morphisms of $(\CDGA^1)^\OG$. Restricting to $n\geq 2$ corrects the set of generating acyclic cofibrations, but is not enough to correct the set of generating cofibrations. Indeed, our $\I^1$ also contains the maps $\{\S^1_H\to \Q\}_{H\in \OG}$ and they are essential. To give a simple example, take $G=\{e\}$, so the category reduces to $\CDGA^1$ with the model structure from \cref{rmk-model-cdga1}. Scull's $\I$, corrected to $n\geq 2$, is the set $\{\S^n \to \D^n\}_{n\geq 2}$. %
Let $\Lambda(x)$ be the exterior algebra with zero differentials and $|x|=1$. Consider the augmentation $\epsilon:\Lambda(x)\to \Q$. It is not a quasi-isomorphism, %
but it has the right lifting property against these maps: the zero maps provide lifts.
\item In \cite[Thm. 5.6]{Scull-2008}, there is a claim of a Quillen equivalence between some categories of $\OG$-presheaves of rational pointed simplicial sets and $G$-systems of connected cdgas satisfying additional conditions. That is not a correct statement, as those are not model categories but rather subcategories of model categories. We have given a corrected version of this in \cref{thm-spaces-r(inj)} without hypotheses of pointedness or connectedness. There, we prove there is a \emph{partial} Quillen equivalence. Moreover, we note that our proof is different: it does not go through the theories of minimal models and equivariant Postnikov decompositions, but rather follows from more abstract model categorical considerations.
\item Elaborating on the previous point, we note an additional confusion in that section of the cited article. Before \cite[Thm. 5.1]{Scull-2008}, the standard $\APL$ piecewise linear differential forms functor is defined. However, it is not well-defined, as it takes a pointed simplicial set and is supposed to give a connected cdga. This is not correct: its value on a disconnected, pointed simplicial set is an \emph{augmented} cdga, not a connected one. This problem extends to the rest of the section. One possible solution is to consider pointed simplicial sets and augmented cdgas. %
See \cref{rmk-pointed-variant}.
\item Assume $G$ is finite. The result \cite[Cor. 4.3]{Scull-2008} which establishes that Triantafillou's equivariant minimal models \cite{Triantafillou} give cofibrant approximations is still valid. That result is valid in the categories of systems of cdgas and their connected or augmented variants, all with the respective $r(\inj)$ model structure. 
We do note that in \cite[Sect. 4]{Scull-2008}, the author should restrict to $n\geq 2$ as explained above. 
\end{enumerate}

\subsection{Other models}

\begin{enumerate}
    \item In \cite{Golasinski98}, the author considers  $\CDGA^\OG$ for $G$ finite, but this functor category is endowed with the projective model structure, so this is neither of the model structures that we considered before, i.e. the injective one and the  $r(\inj)$ one. All three model structures are Quillen equivalent, by \cref{prop-comparison-inj-r(inj)} and since projective and injective model structures on the same category are Quillen equivalent when they exist. At the time of the writing of \cite{Golasinski98}, the theory of cofibrantly generated model categories was just being developed. This may explain why the author does not use the now standard fact that the projective structure exists when the target model category is cofibrantly generated \cite[11.6.1]{Hirschhorn}. %

Golasiński's approach to proving the existence of the projective structure applies the dual of a theorem of \cite{edwards-hastings}, using the fact that the category $\OG$ for $G$ finite is an EI-category (i.e. all endomorphisms are automorphisms) which is cofinite. Thus, the set of isomorphism classes of objects is a poset; the condition of cofinality means that each such isomorphism class has only finitely many predecessors. We point out that the check of the dual of the ``condition N'' \cite[Page 45]{edwards-hastings}, which is a hypothesis in Edwards--Hastings' theorem, is not present, and is not immediate to us. %

In \cite[Thm. 2.7]{Golasinski98} the author presents a theorem similar to \cref{cor-sciarappa-cdga} or \cref{thm-spaces-r(inj)} in the case when the group $G$ is Hamiltonian (i.e. every subgroup is normal).
    \item In the introduction to \cite{Mandell_2002}, there is a statement of a theorem similar to \cref{cor-sciarappa-cdga}, but it is only at the homotopy category level. No explicit proof is given, and the group $G$ is taken to be finite.

\item In the article \cite{Martinez--Rivera} there is a coalgebraic approach to the $G$-equivariant homotopy theory of connected %
Kan complexes for a discrete $G$. It builds on the simplicial coalgebraic models of \cite{Raptis--Rivera}, promoting them to the equivariant case. Let $\mathbb{F}$ be an algebraically closed field. In Proposition 4.2, they prove the component-wise simplicial $\mathbb{F}$-chains functor from $\OG$-indexed presheaves of reduced simplicial sets to $\OG$-indexed presheaves of connected simplicial $\mathbb{F}$-coalgebras is homotopically fully faithful, meaning the derived unit at a cofibrant object is a weak equivalence. We make two observations. First, the fact that this is a Quillen adjunction (Thm. 4.1) follows from the more general \cref{lem-adjunction-on-fun} applied to the non-equivariant Quillen adjunction from \cite{Raptis--Rivera} they recall in Thm. 2.9. %
Second, the fact that it is homotopically fully faithful %
(Prop. 4.2) follows from the more general statement in \cref{lem-sciarappa} (cf. \cref{rmk-homotopically-ff}). 

\item Working over a separably closed field of characteristic $p>0$, Bachmann and Burklund \cite[Thm. 1.2 and 1.3]{Bachmann-Burklund} give coalgebraic models for $p$-complete, simply-connected spaces. Their results are phrased $\infty$-categorically, but they should translate into a partial Quillen equivalence which could then be extended to the equivariant case similarly as we did above for cdgas and cdgls. 
\end{enumerate}

\bibliographystyle{alpha}
\bibliography{MyBib}
\end{document}